\newcommand{\allowpagebreak}
\newtheorem{thm}{Theorem}[section]
\newtheorem{pro}[thm]{Proposition}
\theoremstyle{definition}
\newtheorem{defi}[thm]{Definition}
\newcommand {\emptycomment}[1]{}
\newcommand{\pf}{\noindent{\bf Proof.}\ }
\newcommand{\lam}{\lambda}
\newcommand{\si}{\sigma}
\newcommand{\g}{{A}}
\newcommand{\frkg}{{A}}
\newcommand{\fg}{{A}}
\newcommand{\hg}{{\widehat{A}}}
\newcommand{\cg}{\cdot_{{A}}}
\newcommand{\chg}{\cdot_{\widehat{A}}}
\newcommand{\h}{{V}}
\newcommand{\fh}{{V}}
\newcommand{\frkh}{{V}}
\newcommand{\gh}{{{A}\oplus {V}}}
\newcommand{\hh}{{M}}
\newcommand{\pp}{{A}}
\newcommand{\ulam}{^\lambda}
\newcommand{\dlam}{_\lambda}
\newcommand{\dM}{\mathrm{d}}
\newcommand{\E}{\mathrm{E}}
\newcommand{\Hom}{\mathrm{Hom}}
\newcommand{\Ker}{\mathrm{Ker}}
\newcommand{\End}{\mathrm{End}}
\newcommand{\ad}{\mathrm{ad}}
\newcommand{\id}{\mathrm{id}}
\newcommand{\trl}{\triangleleft}
\newcommand{\trr}{\triangleright}
\begin{document}

\title{ {Deformations and extensions of homotopy associative algebras
\author{\vspace{2mm} Tao Zhang}
 }}

\date{}
\maketitle

\footnotetext{{\it{Keyword}:  associative 2-algebras, cohomology, deformations, abelian extensions}}

\footnotetext{{\it{MSC}}:  16E99, 18N40.}

\begin{abstract}
The representation  and the cohomology theory of associative 2-algebras are developed.
We study the deformations and abelian extensions of associative 2-algebras in details.
\end{abstract}

\section{Introduction}
Strong homotopy associative algebras which are also called $A_\infty$-algebras were first introduced by Stasheff \cite{S1,S2}.
See \cite{K1,Lu} for  $A_\infty$-algebras in representation theory.
On the other direction, strong homotopy Lie algebras were studied in \cite{LM,LS}.
For the 2-term cases, Lie 2-algebras was investigated in details in \cite{Baez,LSZ}.

In this paper, we introduce the representation and the second cohomology theory for associative 2-algebras. We then study the deformations and abelian extensions of associative 2-algebras using this theory. It is proven that the second cohomology group classifies any abelian extension of associative 2-algebras. Additionally, we introduce the concept of crossed modules over associative algebras, which is equivalent to a strict associative 2-algebra. We believe that this new concept is of interest in its own right. Following the same approach as with associative 2-algebras, we develop the representation and the second cohomology theory of crossed modules over associative algebras. Finally, we thoroughly investigate the deformations and abelian extensions of crossed modules over associative algebras.

The paper is organized as follows. In Section 2, we recall the notions of associative 2-algebras and representation. We then study the cohomology of associative 2-algebras in low dimension, and give precise formulas for the coboundary operator. In Section 3, we study the 1-parameter infinitesimal deformation of an associative 2-algebra $\g$, and give the conditions on generating a 1-parameter infinitesimal deformation as a cocycle condition.  In Section 4, we study abelian extensions of associative 2-algebras. We prove that any abelian extension corresponds to a representation and a 2-cocycle.  We also classify abelian extensions by the second cohomology group. In Section 5, we pay attention to the special case of strict associative 2-algebras. We prove that this is equivalent to the concept of crossed modules over associative algebras.  At last, the  representation and the cohomology theory of  crossed modules over associative algebras are given.

\section{Homotopy associative algebras}\label{sec:cohom}

Recall that an associative algebra is a vector space ${A}$  endowed with a  map ${A}\otimes{A}\longrightarrow{A}$
satisfying
$$
(x\cdot y)\cdot  z =x\cdot (y\cdot  z)
$$
for all $x,y,z\in {A}.$

For an associative algebra ${{A}}$, a representation of  ${{A}}$ is a vector space ${M}$ together with two  maps
$$\cdot: {{A}}\otimes {M}\to {M} \,\,\, \text{and}\,\,\, \cdot: {M}\otimes {{A}}\to {M},$$
satisfying the following three axioms
\begin{itemize}
\item[$\bullet$]  $(x\cdot y)\cdot m=x\cdot (y\cdot  m)$,
\item[$\bullet$]  $x\cdot (m\cdot y)=(x\cdot  m)\cdot  y$,
\item[$\bullet$]   $m\cdot (x\cdot y)=(m\cdot x)\cdot y.$
\end{itemize}

The Hochschild cohomology of ${A}$ with coefficients in ${M}$ is the cohomology of the cochain complex
$C^n({A},{M})=\Hom(\otimes^n{A},{M})$ with the coboundary operator $d:C^n({A},{M})\longrightarrow
C^{n+1}({A},{M})$ defined by
\begin{eqnarray}
\nonumber&&d f^n(x_1,\dots,x_{n+1})\\
\nonumber&=&x_1\cdot f^n(x_2,\dots,x_{n+1})+(-1)^{n+1}f^n(x_1,\dots,x_k)\cdot x_{n+1}\\
\label{formulapartial}&&+\sum_{i=1}^{n}(-1)^i f^n(x_1,\dots,,x_{i-1},x_i\cdot x_{i+1},\dots,x_{n+1}).
\end{eqnarray}
It is proved that $d\circ d=0$ .

\begin{defi}\label{defi:Leibniz2}
  An associative 2-algebra $\g=\g_{{1}}\oplus \g_{0}$ consists of the following data:
\begin{itemize}
\item[$\bullet$] a complex of vector spaces $l_1^\g=\dM_\g:\g_{{1}}{\longrightarrow}\g_0,$ 

\item[$\bullet$] bilinear maps $l_2^\g=\cg:\g_i\otimes \g_j\longrightarrow
\g_{i+j}$, where $0\leq i+j\leq 1$,

\item[$\bullet$] a  trilinear map $l_3^\g:\g_0\otimes \g_0\otimes \g_0\longrightarrow
\g_{{1}}$,
   \end{itemize}
   such that for any $x,y,z,t\in \g_0$ and $a,b\in \g_{{1}}$, the following equalities are satisfied:
\begin{itemize}
\item[$\rm(a)$] $\dM_\g (x\cg a)=x\cg \dM_\g a,$
\item[$\rm(b)$]$\dM_\g (a\cg x)=\dM_\g a\cg x,$
\item[$\rm(c)$]$\dM_\g a\cg b=a\cg\dM_\g b,$
\item[$\rm(d)$]$\dM_\g l_3^\g(x,y,z)=(x\cg y)\cg z-x\cg (y\cg z),$
\item[$\rm(e_1)$]$ l_3^\g(x,y,\dM_\g a)=(x\cg y)\cg a-x\cg (y\cg a),$
\item[$\rm(e_2)$]$ l_3^\g(x,\dM_\g a,y)=(x\cg a)\cg y-x\cg (a\cg y),$
\item[$\rm(e_3)$]$ l_3^\g(\dM_\g a,x,y)=(a\cg x)\cg y-a\cg (x\cg y),$
\item[$\rm(f)$] the Stasheff identity:
\begin{eqnarray*}
x\cg l_3^\g(y,z,t)+l_3^\g(x,y,z)\cg t&=&l_3^\g(x\cg y,z,t)-l_3^\g(x,y\cg z,t)+l_3^\g(x,y,z\cg t).
\end{eqnarray*}
\end{itemize}
\end{defi}
We usually denote an associative 2-algebra by $(\frkg;\dM_\g,\cg,l_3^\g)$, or simply by $(\frkg;\dM,\cdot,l_3)$, $A$.

When $\dM=0$ and $l_3=0$, from Condition $(d)$ Definition \ref{defi:Leibniz2}, we get that $\fg_0$ is an associative algebra, from $(e_1)$, $(e_2)$ and $(e_3)$ in Definition \ref{defi:Leibniz2}, we obtain that  $\fg_1$ is a representation of  associative algebra $\fg_0$.
In general, $\dM\neq 0$ and $l_3\neq  0$, thus an associative 2-algebra is also called a homotopy  associative algebra.

\begin{defi}\label{defi:Lie-2hom}
Let $(\frkg;\dM_\g,\cg,l_3^\g)$ and $(\frkg';\dM',\cdot_{\g'},l_3')$ be associative 2-algebras. A
associative 2-algebra homomorphism $F$ from $\frkg$ to $ \frkg'$ consists of  linear maps $F_0:\frkg_0\rightarrow \frkg_0',~F_1:\frkg_{{1}}\rightarrow \frkg_{{1}}'$
 and $F_{2}: \frkg_{0}\times \frkg_0\rightarrow \frkg_{{1}}'$,
such that the following equalities hold for all $ x,y,z\in \frkg_{0},
a\in \frkg_{{1}},$
\begin{itemize}
\item [$\rm(i)$] $F_0\dM_\g=\dM'F_1$,
\item[$\rm(ii)$] $F_{0}(x\cg y)-F_{0}(x)\cdot_{\g'}F_{0}(y)=\dM'F_{2}(x,y),$
\item[$\rm(iii)_1$] $F_{1} (x\cg a)-F_{0}(x)\cdot_{\g'} F_{1}(a)=F_{2}(x,\dM_\g (a))$,
\item[$\rm(iii)_2$] $F_{1}(a\cg x)-F_{1}(a)\cdot_{\g'} F_{0}(x)=F_{2}(\dM_\g (a),x)$,
\item[$\rm(iv)$] $F_1(l_3^\g (x,y,z))-l_3'(F_0(x),F_0(y),F_0(z))$\\
  $=F_2(x, y\cg z)- F_2(x\cg y,z) + F_0(x)\cdot_{\g'} F_2(y,z) - F_2(x,y)\cdot_{\g'} F_0(z).$
\end{itemize}
 If $F_2=0$, the homomorphism $F$
is called a strict homomorphism.
\end{defi}

The identity homomorphism $1_\frkg: \frkg\to \frkg$ has the identity chain map together with $(1_\frkg)_2=0$.

\begin{defi}\label{ass-der-inf-def}
Let $(A_1 \xrightarrow{d} A_0, \cdot, {l}_3)$ be a $2$-term $A_\infty$-algebra. A homotopy derivation of degree $0$ on it consists of a chain map ${D} : A \rightarrow A$ (which consists of linear maps ${D}_i : A_i \rightarrow A_i$, for $i = 0 ,1$ satisfying ${D}_0 \circ d = d \circ {D}_1$) and ${D}_2 : A_0 \times A_0 \rightarrow A_1$ satisfying the followings: for any $x,y,z \in A_0$ and $m \in A_1$,
\begin{itemize}
\item[(a)] $ {D}_0 (x)\cdot y +x\cdot {D}_0 (b) - {D}_0 (x\cdot y)=d \circ {D}_2 (x, y),$
\item[(b)] ${D}_0 (x)\cdot m + x\cdot {D}_1 (m)  - {D}_1 (x\cdot m)={D}_2 (x, dm),$
\item[(c)] $ {D}_1 (m)\cdot x + m\cdot {D}_0 (x) - {D}_1 (m\cdot x)={D}_2 (dm, x),$
\item[(d)] $ {l}_3 ({D}_0 x, y, z) + {l}_3 (x, {D}_0 y, z) + {l}_3 (x, y, {D}_0 z)-{D}_1 \circ {l}_3 (x, y, z)= {D}_2 (x\cdot y, z)- {D}_2 (x, y\cdot z) + {D}_2 (x, y)\cdot z - x\cdot {D}_2 (y, z).$
\end{itemize}
\end{defi}

Let $F:\frkg\to \frkg'$ and $G:\frkg'\to \frkg''$ be two homomorphisms, then their composition $GF:\frkg\to \frkg''$
is a homomorphism defined as $(GF)_0=G_0\circ F_0:\frkg_0\to \frkg''_0$, $(GF)_1=G_1\circ F_1:\frkg_{{1}}\to \frkg''_{{1}}$
and
$$(GF)_2=G_2\circ (F_0\times F_0)+G_1\circ F_2:\frkg_0\times \frkg_0\to \frkg''_{{1}}.$$

Let $ {V}:V_{{1}}\stackrel{\partial}{\longrightarrow} V_0$ be a 2-term complex of vector spaces.
Then we get a new 2-term complex of vector spaces $\End({V}):\End^1(\mathbb
V)\stackrel{\delta}{\longrightarrow} \End^0_\partial({V})$ by
defining $\delta(A)=\partial\circ A+A\circ\partial$ for all
$A\in\End^1({V})$, where $\End^1({V})=\Hom(V_0,V_{{1}})$
and
$$\End^0_\partial({V})=\{X=(X_0,X_1)\in \Hom(V_0,V_0)\oplus \Hom(V_{{1}},V_{{1}})|~X_0\circ \partial=\partial\circ X_1\}.$$
Define  $l_2:\wedge^2 \End({V})\longrightarrow \End(\mathbb
V)$ by setting:
\begin{equation}
\left\{\begin{array}{l}l_2(X,Y)=XY,\\
l_2(X,A)=XA,\\
l_2(A,A')=0,\end{array}\right.\nonumber
\end{equation}
 for all $X,Y\in
\End^0_\partial({V})$ and $A,A'\in \End^1({V})$.
With the above notations, $(\End({V}),\delta,l_2)$ is a strict associative $2$-algebra.

\begin{defi}
A representation or bimodule of an associative 2-algebra $(\frkg; \dM_\g,\cg,l_3^\g)$ on a 2-term complex ${{V}}: \h_{{1}}\overset{\dM_\h}{\longrightarrow}\h_{0}$ consists of an action of $\fg_0$ on $\fh$,  two bilinear maps
$$\trr:\frkg_1\otimes V_0\longrightarrow V_1, a\otimes u\mapsto a\trr u,
\quad \trl: V_0\otimes \frkg_1\longrightarrow V_1, u\otimes a\mapsto u\trl a$$
and three trilinear maps
$$\fg_0\otimes\fg_0\otimes \fh_0\longrightarrow \h_1, x\otimes y\otimes u\mapsto (x,y)\trr u, $$
$$\fg_0\otimes\fh_0\otimes \fg_0\longrightarrow \h_1, x\otimes u\otimes y\mapsto x\trr u\trl y,$$
$$\fh_0\otimes\fg_0\otimes \fg_0\longrightarrow \h_1, u\otimes x\otimes y \mapsto  u\trl (x,y)$$
satisfying the following axioms:
 \begin{eqnarray*}
  &&(x\cg y)\trr u-x\trr (y\trr u)=\dM_{\frkh}((x,y)\trr u),\\
  &&(x\cg y)\trr m-x\trr (y\trr m)=(x,y)\trr \dM_{\frkh}m,\\
  &&( x\cg a)\trr u-x\trr(a\trr  u)=(x,\dM_\g (a))\trr u,\\
  &&( a\cg x)\trr u-a\trr(x\trr  u)=(\dM_\g (a),x)\trr u,
 \end{eqnarray*}
  \begin{eqnarray*}
  &&(x\trr u)\trl y-x\trr (u\trl y)=\dM_{\frkh}(x\trr u\trl y),\\
&&(x\trr m)\trl y-x\trr (m\trl y)=x\trr (\dM_{\frkh}m) \trl y),\\
  &&(x\trr u)\trl a-x\trr (u\trl a)=x\trr u\trl \dM_{\frkh}(a),\\
  &&(a\trr u)\trl y-a\trr (u\trl y)= \dM_{\frkh}(a) \trr u\trl y,
\end{eqnarray*}
 \begin{eqnarray*}
  &&u\trl (x\cg y)-(u\trl x)\trl y=\dM_{\frkh}(u\trl (x,y)),\\
  &&m\trl (x\cg y)-(m\trl x)\trl y=\dM_{\frkh}m\trl (x,y),\\
  &&u\trl (x\cg a)-(u\trl x)\trl a=(u\trl (x,\dM_{\fg} a)),\\
    &&u\trl (a\cg x)-(u\trl a)\trl x=(u\trl (\dM_{\fg} a, x)),
 \end{eqnarray*}
\begin{eqnarray}
x\trr (u  \trl (y\cdot z)) +(x \trr  u\trl y) \trl z= (x\trr u)  \trl (y, z)-x\trr (u\trl y)\trl z+x\trr u\trl (y\cdot z),
 \end{eqnarray}
\begin{eqnarray}
 x\trr (y\trr u\trl z) +((x,y)\trr u)\trl z =(x\cdot y)\trr u\trl z  -x\trr (y\trr u)\trl z + (x,y)\trr (u\trl z),
  \end{eqnarray}
\begin{eqnarray}
x\trr((y,z)\trr u)+l_3^\g (x,y,z)\trr u =(x\cdot y,z)\trr u -(x, y\cdot z)\trr u+ (x,y)\trr (z\trr u),
\end{eqnarray}
\begin{eqnarray}
u\trl l_3(x,y,z) + (u\trl (x,y))\trl z= (u\trl x) \trl (y,z) -u\trl (x\cdot y,z)+u\trl (x, y\cdot z).
 \end{eqnarray}
\end{defi}


For any associative 2-algebra $(\frkg;\dM_\g,\cg,l_3^\g)$, there is a natural {\bf adjoint representation} on itself. The corresponding representation $\ad=(\ad^L_0,\ad^R_0,\ad^L_1,\ad^R_1,\ad^L_2,\ad^M_2,\ad^R_2)$ is given by
\begin{eqnarray*}
&&\ad^L_0(x)(y+b)=x\trr(y+b)=x\cg (y+b),\\
&& \ad^R_0(x)(y+b)=(y+b)\trl x=(y+b)\cg x,\\
&&\ad^L_1(a)x=a\trr x=a\cg x,\quad \ad^R_1(a)x=x\trl a= x\cg a,\\
&&\ad^L_2(x,y)z=(x,y)\trr z=l_3^\g (x,y,z),\quad \ad^M_2(x,y)z=x\trr z\trl y=l_3^\g (x,z,y),\\
&&\ad^R_2(x,y)z=z\trl (x,y)=l_3^\g (z,x,y).
\end{eqnarray*}

Next we develop a cohomology theory for an associative 2-algebra $\fg$ with coefficients in a representation $({{V}},\rho)$.
The $k$-cochian  $C^{k}(\fg,{{V}})$ is defined to be the space:
\begin{eqnarray}
\Hom(\otimes^{k} \fg_0,V_0)\oplus\Hom(\mathcal{A}^{k-1},V_1)\oplus \Hom(\mathcal{A}^{k-2},V_0)\oplus \Hom(\fg_1^{k-1},V_1)\oplus \cdots
\end{eqnarray}
where
\begin{eqnarray*}
 &&\Hom(\mathcal{A}^{k-1},V_1):=\bigoplus_{i=1}^k  \Hom(\underbrace{{\fg_0}\otimes\cdots\otimes {\fg_0}}_{i-1}\otimes \fg_1\otimes \underbrace{{\fg_0}\otimes\cdots\otimes {\fg_0}}_{k-i},V_1),
 \end{eqnarray*}
is the direct sum of all possible tensor powers of ${\fg_0}$ and $\fg_1$ in which ${\fg_0}$ appears $k-1$ times but $\fg_1$ appears only once  and similarly for $\Hom(\mathcal{A}^{k-2},V_0).$

The cochain complex is given by
\begin{equation} \label{eq:complex}
\begin{split}
 &  V_0\stackrel{\mathbf{D}_0}{\longrightarrow} \Hom(\fg_0,V_0)\oplus\Hom(\fg_1,V_1)\oplus\Hom(\otimes^2\frkg_{0},V_{{1}})\stackrel{\mathbf{D}_1}{\longrightarrow}\\
 & \Hom(\fg_1,V_0)\oplus \Hom({\fg_0}\otimes {\fg_0}, V_0)\oplus \Hom(\mathcal{A}^{1}V_1)\oplus \Hom(\otimes^3\frkg_{0},V_{{1}})\stackrel{\mathbf{D}_2}{\longrightarrow}\\
  &\Hom(\otimes^2\fg_1,V_0)\oplus \Hom(\mathcal{A}^{1},V_0)\oplus
   \Hom(\otimes^3 \fg_0, V_0)\oplus \Hom(\mathcal{A}^{2}, V_1)\oplus
      \Hom(\otimes^4 \fg_0, V_1)\\
  & \stackrel{\mathbf{D}_3}{\longrightarrow}\cdots,
\end{split}
\end{equation}

Thus we get a cochain complex $C^k(\fg,{{V}})$ whose cohomology group
$$\mathbf{H}^k(\fg,{{V}})=Z^k(\fg,{{V}})/B^k(\fg,{{V}})$$
is defined as the cohomology group of $\fg$ with coefficients in ${{V}}$.
We only investigate in detail the first and the second cohomology groups which is needed in the following of this paper as follows.


We give the precise formulas for the coboundary operator in low dimension as follows.


For any 1-cochain $\Phi=(\phi,\varphi,\chi)$, where $\phi\in\Hom(\frkg_{0},V_{0})$, $\varphi\in \Hom(\frkg_{{1}},V_{{1}})$,  $\chi\in \Hom(\otimes^2\frkg_{0},V_{{1}})$,
it is called a $1$-cocycle if the following equations hold:
\begin{eqnarray}
\label{eq:1-cocycle1}&&\dM_\frkh \varphi(a)-\phi \dM_\g (a)=0,\\
\label{eq:1-cocycle2}&&x\trr \phi(y)-\phi(x)\trl y-\phi(x\cg y)+\dM_\frkh\circ \chi(x,y)=0,\\
\label{eq:1-cocycle3}&&x\trr \varphi(a)-\phi(x)\trl a-\varphi(x\cg a)+\chi(x,\dM_\g (a))=0,\\
\label{eq:1-cocycle4}&&a\trr \varphi(x)-\phi(a)\trl x-\varphi(a\cg x)+\chi(\dM_\g (a),x)=0,\\
\nonumber &&x\trr \chi(y,z)-\chi(x,y)\trl z+\chi(x, y\cdot z)- \chi(x\cdot y,z) \\
\label{eq:1-cocycle5}&&-\varphi(l_3^\g (x,y,z)+(x,y)\trr \phi(z)+x\trr\phi(y)\trl z+\phi(x)\trl (y,z)=0.
\end{eqnarray}


For any 2-cochain $(\psi,\omega,\mu,\nu,\theta)$, where $\psi\in \Hom(\frkg_{{1}},V_{0})$, $\omega\in\Hom(\otimes^2\frkg_{0},V_{0})$,
$\mu\in\Hom(\frkg_{{1}}\otimes\frkg_{0},V_{{1}})$, $\nu\in\Hom(\frkg_{0}\otimes\frkg_{{1}},V_{{1}})$, $\theta\in \Hom(\otimes^3\frkg_{0},V_{{1}})$, it is called a 2-coboundary if $(\psi,\omega,\mu,\nu,\theta)=D_1(\phi,\varphi,\chi)$.
It is called a 2-cocycle if the following equalities hold:
\begin{eqnarray}
\label{eq:coc01}&&x\trr \psi(a)-\psi( x\cg a)+\omega(x,\dM_\g (a))-\dM_V\mu(x,a)=0,\\
\label{eq:coc02}&&\psi(a)\trl x-\psi(a\cg x)+\omega(\dM_\g (a),x)-\dM_V\nu(a,x)=0,\\
\label{eq:coc03}&&a\trr \psi(b)+\nu(a,\dM_\g (b))-\psi(a)\trl b-\mu(\dM_\g (a),b)=0,\\[1em]
\label{eq:coc04}&&\omega(x,y)\trl z-x\trr \omega(y,z)+\omega(x\cg y,z)-\omega(x,y\cg z)\\
\nonumber&=&\partial\circ\theta(x,y,z)+\psi(l_3^\g (x,y,z)),\\[1em]
\nonumber&&\omega(x,y)\trl a-x\trr \mu(y,a)+\mu(x\cg y,a)-\mu(x,y\cg a)\\
\label{eq:coc05}&=&\theta(x,y,\dM_\g (a))+(x,y)\trr \psi(a),\\[1em]
\nonumber&&\omega(x,a)\trl y-x\trr \nu(a,y)+\nu( x\cg a,y)-\mu(x,(a\cg y))\\
\label{eq:coc06}&=&\theta(x,\dM_\g (a),y)+x\trr \psi(a) \trl y,\\[1em]
\nonumber&&\omega(a,x)\trl y-a\trr \omega(x,y)+\nu(a\cg x,y)-\nu(a,(x\cg y))\\
\label{eq:coc07}&=&\theta(\dM_\g (a),x,y)+\psi(a)\trl (x,y),\\[1em]
\nonumber&&x\trr \theta(y,z,t)+\theta(x,y,z)\trl t\\
\nonumber&&-\theta(x\cg y,z,t)+\theta(x,y\cg z,t)-\theta(x,y,z\cg t)\\
\nonumber&=&\mu(x,l_3^\g (y,z,t))+\nu(l_3^\g (x,y,z),t)\\
\label{eq:coc08}&&-(x,y)\trr \omega(z,t)-\omega(x,y)\trl (z,t)+x\trr \omega(y,z)\trl t.
\end{eqnarray}
The spaces of 2-coboundaries and 2-cocycles are denoted by $B^2(\fg, {{V}})$ and $Z^2(\fg, {{V}})$ respectively.
By direct computations, it is easy to see that the space of 2-coboundaries is contained in space of 2-cocycles, thus we obtain a second cohomology group  of an associative 2-algebra $\fg$ with coefficients in ${{V}}$  defined to be the quotient space
$$\mathbf{H}^2(\fg,  {{V}}))\triangleq Z^2(\fg, {{V}})/B^2(\fg, {{V}}).$$

\section{Infinitesimal Deformations of associative 2-algebras}

Let $(\frkg; \dM_\g, \cg, l_3^\g)$ be an associative 2-algebra, and $\psi:\g_{{1}}\rightarrow \g_0,~\omega:\otimes^2\g_{0}\rightarrow \g_0,~\nu:\g_0\otimes\g_{{1}}\rightarrow \g_{{1}},~\theta:\otimes^3\g_0\rightarrow\g_{{1}}$ be linear maps. Consider a $\lambda$-parametrized family of linear operations:
\begin{eqnarray*}
 \dM^\lambda (a)&\triangleq&\dM_\g (a)+\lambda\psi (a),\\
 {x\cdot\dlam y}&\triangleq& x\cg y+ \lambda\omega (x, y),\\
 {x\cdot\dlam a}&\triangleq&  x\cg a+ \lambda\mu (x, a),\\
 {a\cdot\dlam x}&\triangleq& a\cg x+ \lambda\nu (a,x),\\
 l_3^\lambda (x, y, z)&\triangleq& l_3^\g (x, y, z)+ \lambda\theta(x, y, z).
 \end{eqnarray*}

If all $(\dM^\lambda, \cdot\dlam, l^\lambda_3)$ endow the graded vector space $\g=\g_0\oplus\g_{{1}}$ with an associative 2-algebra structure, we say that
$(\psi,\omega,\mu, \nu,\theta)$ generates a 1-parameter infinitesimal deformation of associative 2-algebra $\frkg$.

\begin{thm}\label{thm:deformation}
$(\psi,\omega,\mu, \nu,\theta)$ generates a $1$-parameter infinitesimal deformation of the Lie $2$-algebra $\frkg$ is equivalent to the following conditions:
\begin{itemize}
  \item[\rm(i)] $(\psi,\omega,\mu, \nu,\theta)$ is a $2$-cocycle of $\frkg$ with the coefficients in the adjoint representation;

  \item[\rm(ii)] $(\psi,\omega,\mu, \nu,\theta)$ itself defines an associative 2-algebra structure on $\g$.
\end{itemize}
\end{thm}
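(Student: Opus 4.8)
The plan is to substitute the $\lambda$-parametrised family $(\dM^\lambda,\cdot\dlam,l_3^\lambda)$ into each of the eight defining identities $\rm(a)$--$\rm(f)$ of Definition \ref{defi:Leibniz2} and then sort the resulting relations by their degree in $\lambda$. Every structure map is multilinear and each identity is a composite of at most two structure maps, so after substitution each identity becomes a polynomial identity in $\lambda$ of degree at most two. Requiring $(\dM^\lambda,\cdot\dlam,l_3^\lambda)$ to be an associative $2$-algebra for \emph{every} value of $\lambda$ is therefore equivalent to demanding that the coefficients of $\lambda^0$, $\lambda^1$ and $\lambda^2$ vanish separately. The $\lambda^0$ coefficients are, by construction, precisely the identities $\rm(a)$--$\rm(f)$ for the original data $(\frkg;\dM_\g,\cg,l_3^\g)$, which hold because $\frkg$ is an associative $2$-algebra; hence the $\lambda^0$ layer is automatically satisfied and carries no information.

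Next I would collect the $\lambda^1$ coefficients. The crucial bookkeeping point is that each occurrence of a product must be deformed by the term matching the bidegree of its inputs: a product $\g_0\otimes\g_0\to\g_0$ is deformed by $\omega$, a product $\g_0\otimes\g_1\to\g_1$ by $\mu$, a product $\g_1\otimes\g_0\to\g_1$ by $\nu$, the differential by $\psi$, and $l_3^\g$ by $\theta$. In particular, in an expression such as $x\cg\dM_\g a$ the product is between two elements of $\g_0$ (since $\dM_\g a\in\g_0$) and is deformed by $\omega$, not by $\mu$. Carrying this out axiom by axiom and then substituting the adjoint representation (so that $\trr$ and $\trl$ become $\cg$, the three trilinear actions become $l_3^\g$, and $\partial=\dM_\g$), the $\lambda^1$ coefficient of each axiom reproduces exactly one of the $2$-cocycle equations \eqref{eq:coc01}--\eqref{eq:coc08}, under the correspondence $\rm(a)\leftrightarrow$\eqref{eq:coc01}, $\rm(b)\leftrightarrow$\eqref{eq:coc02}, $\rm(c)\leftrightarrow$\eqref{eq:coc03}, $\rm(d)\leftrightarrow$\eqref{eq:coc04}, $\rm(e_1)\leftrightarrow$\eqref{eq:coc05}, $\rm(e_2)\leftrightarrow$\eqref{eq:coc06}, $\rm(e_3)\leftrightarrow$\eqref{eq:coc07} and $\rm(f)\leftrightarrow$\eqref{eq:coc08}. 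This establishes that the $\lambda^1$ layer is equivalent to item $\rm(i)$.

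Finally I would read off the $\lambda^2$ coefficients. Inspecting each one, the quadratic term of a given axiom is that very axiom with $\psi$ in the role of the differential, with $\omega,\mu,\nu$ in the roles of the three products, and with $\theta$ in the role of $l_3^\g$; for instance the quadratic part of $\rm(d)$ is $\psi(\theta(x,y,z))=\omega(\omega(x,y),z)-\omega(x,\omega(y,z))$, which is $\rm(d)$ for the new data. Thus the $\lambda^2$ layer asserts precisely that $(\psi,\omega,\mu,\nu,\theta)$ is by itself an associative $2$-algebra structure on $\g$, which is item $\rm(ii)$. Assembling the three layers yields the claimed equivalence.

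The main obstacle is the $\lambda^1$ step: one must verify, for all eight identities, that the linear term matches the corresponding equation among \eqref{eq:coc01}--\eqref{eq:coc08} on the nose, with every sign correct. The difficulty lies entirely in the degree bookkeeping---attaching the right deformation term ($\omega$, $\mu$, $\nu$ or $\psi$) to each product according to its inputs---and is most delicate for the mixed associativity identities $\rm(e_1)$--$\rm(e_3)$ and the Stasheff identity $\rm(f)$, where several $l_3^\g$- and $\theta$-terms with differing signs must be tracked simultaneously.
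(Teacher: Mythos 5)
Your proposal is correct and follows essentially the same route as the paper: substitute the $\lambda$-parametrised structure maps into axioms $\rm(a)$--$\rm(f)$ of Definition \ref{defi:Leibniz2}, observe the $\lambda^0$ layer is automatic, identify the $\lambda^1$ coefficients with the $2$-cocycle equations for the adjoint representation, and the $\lambda^2$ coefficients with the statement that $(\psi,\omega,\mu,\nu,\theta)$ is itself an associative $2$-algebra structure. The degree bookkeeping you flag (e.g.\ deforming $x\cg\dM_\g a$ by $\omega$ rather than $\mu$) is exactly how the paper's computation proceeds.
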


\pf
If $(\frkg, \dM^\lambda, [\cdot,\cdot]\dlam, l^\lambda_3)$ is an associative 2-algebra, by (a) in Definition \ref{defi:Leibniz2}, we have
\begin{eqnarray*}
&&\dM\ulam (x\cdot\dlam a)-x \cdot\dlam \dM\ulam (a)\\
&=&(\dM_\g+\lambda\psi )( x\cg a+ \lambda\mu (x, a))-x\cg(\dM_\g (a)+\lambda\psi(a))-\lambda\omega (x, \dM_\g (a)+\lambda\psi(a))\\
&=&\dM_\g x\cg a+\lambda\{\psi  x\cg a+\dM_\g\mu (x,a)\}+\lambda^2\psi \omega (x,a)\\
&&-x\cg\psi(a)-\lambda\{\omega (x, \dM _\g a)+ x\cg\psi(a)\}-\lambda^2\mu (x,\psi(a))\\
&=&0,
\end{eqnarray*}
which implies that
\begin{eqnarray}
\label{eq:2-cocycle01'}\psi ( x\cg a)+\dM_\g \mu(x,a)-\omega (x, \dM_\g (a))- x\cg\psi(a)&=&0,\\
\label{eq:2-cocycle01''}\psi \mu (x,a)-\omega (x,\psi(a))&=&0.
\end{eqnarray}
Similarly, by (b) we have
\begin{eqnarray}
\label{eq:2-cocycle02'}\psi (a\cg x)+\dM_\g \nu(a,x)-\omega (\dM_\g (a),x)-\psi(a)\cg x &=&0,\\
\label{eq:2-cocycle02''}\psi \nu (a,x)-\omega (\psi(a),x)&=&0.
\end{eqnarray}
Now by (c) we have
\begin{eqnarray*}
&&\dM\ulam (a)\cdot\dlam b-a\cdot\dlam \dM\ulam (b)\\
&=&(\dM_\g (a)+\lambda\psi(a))\cg b+\lambda\nu(\dM_\g (a)+\lambda\psi(a),b)\\
&&-a\cg  (\dM_\g (b)+\lambda\psi(b))-\lambda\nu(a, \dM_\g (b)+\lambda\psi(b))\\
&=&\dM_\g (a)\cg b+\lambda\{\psi(a)\cg b+\nu (\dM_\g (a),b)\}+\lambda^2\nu (\psi(a),b)\\
&&-a\cg\dM_\g (b)-\lambda\{\nu (a, \dM_\g (b))+a\cg\psi(b)\}-\lambda^2\nu (a,\psi(b))\\
&=&0,
\end{eqnarray*}
which implies that
\begin{eqnarray}
\label{eq:2-cocycle03'}\psi(a)\cg b-a\cg\psi(b)+\mu (\dM_\g (a),b)-\nu (a,\dM_\g (b))&=&0,\\
\label{eq:2-cocycle03''}\mu (\psi(a),b)-\nu (a,\psi(b))&=&0.
\end{eqnarray}

Next, by (d) in Definition \ref{defi:Leibniz2}, we have
\begin{eqnarray*}
&&x\cdot\dlam(y\cdot\dlam z)-(x\cdot\dlam y)\cdot\dlam z+\dM\ulam l\ulam_3(x,y,z)\\
&=&x\cdot\dlam (y\cg z+\lambda\omega(y,z))-(x\cg y+\lambda\omega (x, y))\cdot\dlam z\\
&&+(\dM_\g +\lambda\psi )(l_3^\g+\lambda\theta)(x,y,z)\\
&=&x\cg (y\cg z)-(x\cg y)\cg z+\dM_\g l_3^\g (x,y,z)\\
&&+\lambda\left\{\omega (x,y\cg z)-\omega (x\cg y,z)+x\cg\omega(y,z)-\omega (x, y)\cg z\right.\\
&&\left.+\psi (l_3^\g (x,y,z))+\dM_\g \theta(x,y,z)\right\}\\
&&+\lambda^2\{\omega (x,\omega (y,z))-\omega (\omega (x,y),z)+\psi \theta(x,y,z)\}\\
&=&0,
\end{eqnarray*}
which implies that
\begin{eqnarray}
\notag&&\omega (x,y\cg z)-\omega (x\cg y,z)+x\cg \omega(y,z)-\omega (x, y)\cg z\\
\label{eq:2-cocycle11'}  &&+\psi (l_3^\g (x,y,z))+\dM_\g \theta(x,y,z)=0,\\
\label{eq:2-cocycle11''}&&\omega (x,\omega (y,z))-\omega (\omega (x,y),z)+\psi \theta(x,y,z)=0.
\end{eqnarray}
Similarly, by $\rm(e_1)$ we have
\begin{eqnarray}
\nonumber&&\mu (x,y\cg a)-\mu (x\cg y,a)+x\cg \mu(y,a)-\omega(x,y)\cg a\\
\label{eq:2-cocycle21'}&&+l_3^\g (x,y,\psi(a))+\theta(x,y,\dM_\g (a))=0,\\
\label{eq:2-cocycle21''}&&\mu (x,\mu(y,a))-\nu (\omega(x,y),a)+\theta(x,y,\psi(a))=0.
\end{eqnarray}
By $\rm(e_2)$ we have
\begin{eqnarray}
\nonumber&&\mu (x,a\cg y)-\nu ( x\cg a,y)+x\cg \nu(a,y)-\mu(x,a)\cg y\\
\label{eq:2-cocycle22'}&&+l_3^\g (x,\psi(a), y)+\theta(x,\dM_\g (a),y)=0,\\
\label{eq:2-cocycle22''}&&\mu (x,\nu(a,y))-\nu (\omega(x,a),y)+\theta(x,\psi(a), y)=0.
\end{eqnarray}
By $\rm(e_3)$ we have
\begin{eqnarray}
\nonumber&&\nu (a,y\cg x)-\nu (a\cg y,x)+a\cg \omega(y,x)-\nu(a,y)\cg x\\
\label{eq:2-cocycle23'}&&+l_3^\g (\psi(a),x,y)+\theta(\dM_\g (a),x,y)=0,\\
\label{eq:2-cocycle23''}&&\nu (a,\omega(y,x))-\nu (\nu(a,y),x)+\theta(\psi(a),x,y)=0.
\end{eqnarray}

At last, by (f) in Definition \ref{defi:Leibniz2}, we have
\begin{eqnarray}
\nonumber&&x\cg \theta(y,z,t)+\theta(x,y,x)\cg t\\
\nonumber &&-\theta(x\cg y,z,t)+\theta(x,y\cg z,t)-\theta(x,y,z\cg t)\\
\nonumber && =\mu(x,l_3^\g (y,z,t))+\nu(l_3^\g (x,y,z),t)\\
\label{eq:2-cocycle3'} &&+l_3^\g (x,\omega(y,z),t)-l_3^\g (x,y,\omega(z,t))-l_3^\g (\omega(x,y),z,t)\\[1em]
\nonumber&&\mu(x,\theta(y,z,t))+\nu(\theta(x,y,z),t)\\
\label{eq:2-cocycle3''}  &&-\theta(\omega(x,y),z,t)+\theta(x,\omega(y,z),t)-\theta(x,y,\omega(z,t))=0.
\end{eqnarray}
By \eqref{eq:2-cocycle01'}, \eqref{eq:2-cocycle02'}, \eqref{eq:2-cocycle11'},
\eqref{eq:2-cocycle21'}, \eqref{eq:2-cocycle22'}, \eqref{eq:2-cocycle23'} and \eqref{eq:2-cocycle3'},
we deduce that $(\psi,\omega,\mu,\nu,\theta)$ is a 2-cocycle of $\g$ with the coefficients in the adjoint representation.

Furthermore, by \eqref{eq:2-cocycle01''}, \eqref{eq:2-cocycle02''}, \eqref{eq:2-cocycle11''},
\eqref{eq:2-cocycle21''}, \eqref{eq:2-cocycle22''}, \eqref{eq:2-cocycle23''} and \eqref{eq:2-cocycle3''},
 $(\g;\psi, \omega, \mu, \nu, \theta)$ is an associative 2-algebra.
\qed

It can be shown that equivalent deformations give rise to the same 2-cocycle classes in $H^2(\fg,\fg)$.

\subsection{Nijenhuis operators}

In this subsection, we introduce the notion of Nijenhuis operators,
which could gives trivial deformations.

Let $(\frkg; \dM_\g, \cg, l_3^\g)$ be an associative 2-algebra.
Consider a second order deformation $(\fg_\lambda;\dM\ulam,\cdot\dlam, l^\lambda_3)$
\begin{eqnarray*}
 \dM^\lambda(a)&\triangleq&\dM_\g (a)+\lambda\psi (a),\\
 {x\cdot\dlam y}&\triangleq& (x\cg y)+ \lambda\omega (x, y),\\
 {x\cdot\dlam a}&\triangleq&  x\cg a+ \lambda\mu (x, a),\\
 {a\cdot\dlam x}&\triangleq& a\cg x+ \lambda\nu (a,x),\\
 l_3^\lambda (x, y, z)&\triangleq& l_3^\g (x, y, z)+ \lambda\theta_1(x, y, z)+ \lambda^2\theta_2(x, y, z).
 \end{eqnarray*}

\begin{defi}
An infinitesimal deformation is said to be trivial if there exists  linear maps $N_0:\frkg_0\to \frkg_0,N_1: \frkg_{{1}}\to \frkg_{{1}}$,
and $N_2:\otimes^2 \frkg_0\to \frkg_{{1}}$,
such that $(T_0,T_1,T_2)$ is a homomorphism from $(\fg_\lambda;\dM\ulam,\cdot_\lambda, l^\lambda_3)$ to $(\fg;\dM_\g,\cg, l^\g_3) $, where $T_0 = \id + \lambda N_0$, $T_1 = \id + \lambda N_1$ and $T_2 = \lambda N_2$.
\end{defi}

Note that $(T_0,T_1,T_2)$ is a homomorphism means that
\begin{eqnarray}
\label{eq:deform0} \dM_\g \circ T_1(a)&=&T_0\circ \dM\ulam(a),\\
\label{eq:deform1} T_0(x\cdot\dlam y)&=&T_0 (x)\cg T_0 (y)+\dM_\g T_2(x,y),\\
\label{eq:deform2} T_1(x\cdot\dlam a)&=&T_0 (x)\cg T_1 (a)+T_2(x,\dM\ulam a),\\
\label{eq:deform22}T_1(a\cdot\dlam x)&=&T_1 (a)\cg T_0 (x)+T_2(\dM\ulam a, x),\\
\nonumber T_1(l_3^\lam(x,y,z))&=&l_3^\g (T_0(x),T_0(y),T_0(z))+T_2(x, y\cdot_\lam z)- T_2(x\cdot_\lam y,z) \\
\label{eq:deform3} && + T_0(x)\cg  T_2(y,z)  - T_2(x,y)\cg T_0(z).
\end{eqnarray}
Now we consider conditions that $N=(N_0,N_1,N_2)$ should  satisfy.

For \eqref{eq:deform0}, we have
$$\dM_\g (a)+\lambda \dM_\g N_1(a)=\dM_\g (a)+\lambda N_0(\dM_\g (a))+\lambda \psi(a)+\lambda^2N_0\psi(a).$$
Thus, we have
$$\psi(a)=\dM_\g N_1(a)-N_0\dM_\g (a),$$
$$N_0\psi(a)=0.$$
It follows that $N$ must satisfy the following condition:
\begin{align}\label{Nij0}
N_0(\dM_\g N_1(a)-N_0\dM_\g (a))=0.
\end{align}

For \eqref{eq:deform1}, the left hand side is equal to
$$(x\cg y)+\lambda N_0(x\cg y)+\lambda\omega(x,y)+\lambda^2 N_0\omega(x, y),$$
and the right hand side is equal to \begin{eqnarray*}
(x\cg y)+\lambda N_0(x)\cg y+\lambda x\cg N_0(y)+\lambda^2 N_0(x)\cg N_0(y)+\lambda\dM_\g N_2(x,y).
\end{eqnarray*}
Thus, \eqref{eq:deform1} is equivalent to
$$\omega(x, y) = N_0(x)\cg y + x\cg N_0(y)- N_0(x\cg y)+\dM_\g N_2(x,y),$$
$$N_0\omega(x, y) = N_0(x)\cg N_0(y). $$
It follows that $N$ must satisfy the following condition:
\begin{align}\label{Nij1}
N_0(x)\cg N_0(y) - N_0(N_0(x)\cg y - N_0(x\cg N_0(y))+ N_0^2(x\cg y)-N_0\dM_\g N_2(x,y)=0.
\end{align}

For \eqref{eq:deform2},  the left hand side is equal to
$$ x\cg a+\lambda \mu(x,a) +\lambda N_1( x\cg a)+\lambda^2 N_1\mu(x,a),$$
and the right hand side is equal to
$$ x\cg a+\lambda N_0(x)\cg a+\lambda x\cg N_1(a)+\lambda^2 N_0(x)\cg N_1(a)+\lambda  N_2(x,\dM_\g (a))+\lambda^2N_2(x,\psi(a)).$$
Thus, \eqref{eq:deform2} is equivalent to
$$\mu(x, a) = N_0(x)\cg a +  x\cg N_1(a) - N_1(x\cg a)+ N_2(x,\dM_\g (a)),$$
$$N_1\mu(x, a) = N_0(x)\cg N_1(a)+N_2(x,\psi(a)). $$
It follows that $N$ must satisfy the following condition:
\begin{align}\label{Nij2}
N_0(x)\cg N_1(a)+N_2(x,\psi(a)) - N_1(N_0(x)\cg a) - N_1 (x\cg N_1(a)) + N_1^2(x\cg a)-N_1N_2(x,\dM_\g (a))=0.
\end{align}

Similarly, from \eqref{eq:deform22} we get
\begin{align}\label{Nij22}
N_1(a)\cg N_0(x)+N_2(\psi(a), x) - N_1 (a\cg N_0(x)) - N_1 N_1(a)\cg x + N_1^2(a\cg x)-N_1N_2(\dM_\g (a),x)=0.
\end{align}

For \eqref{eq:deform3}, the left hand side is equal to
\begin{eqnarray*}
&&l_3^\g (x,y,z)+\lambda\theta_1(x,y,z)+\lambda^2\theta_2(x,y,z)\\
&&+ \lambda N_1l_3^\g (x,y,z)+\lambda^2 N_1\theta_1(x,y,z)+\lambda^3N_1\theta_2(x,y,z)\\
&&+\lambda N_2(x\cg y,z)+c.p.+\lambda^2 N_2(\omega(x,y),z)+c.p.
\end{eqnarray*}
and the right hand side is equal to
\begin{eqnarray*}
&&l_3^\g (x,y,z)+\lambda l_3^\g (N_0(x),y,z)+c.p.+\lambda^2 l_3^\g (N_0(x),N_0(y),z)+c.p.\\
&&+\lambda^3l_3^\g (N_0(x),N_0(y),N_0z)+\lambda[x, N_2(y,z)]_\g+c.p.+\lambda^2 N_0(x)\cg  N_2(y,z)+c.p.
\end{eqnarray*}
Then we obtain
 \begin{eqnarray}
\notag\theta_1(x,y,z)&=&l_3^\g (N_0(x),y,z)+l_3^\g (x,N_0(y),z)+l_3^\g (x,y,N_0z)-N_1(l_3^\g (x,y,z))\\
\notag&&-N_2(x\cg y,z)-c.p.+x\cg N_2(y,z)+c.p.\\
\notag\theta_2(x,y,z)&=&l_3^\g (N_0(x),N_0(y),z)+l_3^\g (N_0(x),y,N_0z)+l_3^\g (x,N_0(y),N_0z)-N_1(\theta_1(x,y,z))\\
\notag&&-N_2(\omega(x,y),z)-c.p.+N_0(x)\cg N_2(y,z)+c.p.\\
  N_1\theta_2(x,y,z)&=& l_3^\g (N_0(x),N_0(y),N_0z).\label{Nij3}
\end{eqnarray}

Thus,  $(T_0,T_1,T_2)$ is a homomorphism if and only if  $N=(N_0,N_1,N_2)$ satisfy conditions \eqref{Nij0}, \eqref{Nij1}, \eqref{Nij2}, \eqref{Nij22},  \eqref{Nij3}.

  Define the following maps
 \begin{eqnarray*}
   \dM_N(a)&=&\dM_\g\circ N_1(a)-N_0\circ \dM_\g (a),\\
~x\cdot_N y&=&N_0(x)\cg y + x\cg N_0(y)- N_0(x\cg y),\\
~x\cdot_N a&=&N_0(x)\cg a +  x\cg N_1(a) - N_1(x\cg a),\\
~a\cdot_N x&=& N_1(a)\cg x +  a\cg N_0(x) - N_1(a)\cg x,\\
~l_3^N(x,y,z)&=&l_3^\g (N_0(x),y,z)+l_3^\g (x,N_0(y),z)+l_3^\g (x,y,N_0z)-N_1(l_3^\g (x,y,z)),\\
~l_3^{N^2}(x,y,z)&=&l_3^\g (N_0(x),N_0(y),z)+l_3^\g (N_0(x),y,N_0z)+l_3^\g (x,N_0(y),N_0z)-N_1(l_3^N(x,y,z)).
 \end{eqnarray*}
 \begin{defi}\label{defi:Nij}
  A map $N=(N_0,N_1)$ is called a Nijenhuis operator if for all $x,y,z\in\g_0$ and $a\in\g_{{1}}$, the following conditions are satisfied:
   \begin{itemize}
     \item[(i)] $N_0\dM_N(a)=0;$
     \item[(ii)] $N_0(x\cdot_N y)=N_0(x)\cg N_0(y);$
     \item[(iii)] $N_1(x\cdot_N a)=N_0(x)\cg N_1(a);$
          \item[(iv)] $N_1(a\cdot_N x)=N_1(a)\cg N_0(x);$
     \item[(v)] $N_1l_3^{N^2}(x,y,z)=l_3^\g (N_0(x),N_0(y),N_0z).$
   \end{itemize}
 \end{defi}

\begin{thm}\label{thm:Nijenhuis}
Let $N=(N_0,N_1)$ be a Nijenhuis operator. Then a second order deformation can be obtained by putting
\begin{equation}\label{Nijenhuis}
\left\{\begin{array}{rll}
\psi(a)&=&\dM_\g N_1(a)-N_0\dM_\g (a),\\
\omega(x, y)&=&N_0(x)\cg y + x\cg N_0(y)- N_0(x\cg y),\\
\mu(x, a)&=&N_0(x)\cg a+ x\cg N_1(a) - N_1(x\cg a),\\
\nu(a,x)&=&N_1(a)\cg x + a\cg  N_0(x) - N_1(a)\cg x,\\
\theta_1(x,y,z)&=&l_3^\g (N_0(x),y,z)+l_3^\g (x,N_0(y),z)+l_3^\g (x,y,N_0z)-N_1(l_3^\g (x,y,z)),\\
\theta_2(x,y,z)&=&l_3^\g (N_0(x),N_0(y),z)+l_3^\g (N_0(x),y,N_0z)+l_3^\g (x,N_0(y),N_0z)-N_1(\theta_1(x,y,z)).
\end{array}\right.
\end{equation}
\end{thm}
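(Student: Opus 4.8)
The plan is to realize the deformed operations as the \emph{transport} of the original structure along the invertible pair $T_0=\id+\lambda N_0$ and $T_1=\id+\lambda N_1$ (with $T_2=0$), so that the associative $2$-algebra axioms for $(\dM^{\lambda},\cdot\dlam,l_3^{\lambda})$ are inherited from those of $\frkg$ rather than checked ab initio. Working over $\K[[\lambda]]$ the maps $T_0,T_1$ are invertible, with inverses $\sum_{k\ge0}(-\lambda)^kN_0^k$ and $\sum_{k\ge0}(-\lambda)^kN_1^k$; this is what lets me conjugate freely.

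The first step is an auxiliary lemma: with $\psi,\omega,\mu,\nu,\theta_1,\theta_2$ given by \eqref{Nijenhuis}, the five intertwining identities \eqref{eq:deform0}--\eqref{eq:deform3} hold with $T_2=0$, that is
$$T_0\circ\dM^{\lambda}=\dM_\g\circ T_1,\qquad T_0(x\cdot\dlam y)=T_0(x)\cg T_0(y),$$
$$T_1(x\cdot\dlam a)=T_0(x)\cg T_1(a),\qquad T_1(a\cdot\dlam x)=T_1(a)\cg T_0(x),$$
$$T_1\bigl(l_3^{\lambda}(x,y,z)\bigr)=l_3^\g(T_0x,T_0y,T_0z).$$
Each identity I would verify by comparing coefficients of $\lambda^k$. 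The $\lambda^0$ coefficients are automatic; the $\lambda^1$ coefficients reproduce \emph{exactly} the defining formulas for $\psi,\omega,\mu,\nu,\theta_1$ in \eqref{Nijenhuis}; and the top coefficients --- $\lambda^2$ for the differential and the three products, and $\lambda^3$ for $l_3$ (once the $\lambda^2$ term has pinned down $\theta_2$) --- are precisely the Nijenhuis conditions (i)--(v) of Definition \ref{defi:Nij}. Since $N$ is assumed to be a Nijenhuis operator, all five identities hold.

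With these identities available, the second step checks the axioms (a)--(f) of Definition \ref{defi:Leibniz2} for the deformed data by conjugating the matching axiom of $\frkg$ through $T$. For (d), say, one computes
$$(x\cdot\dlam y)\cdot\dlam z-x\cdot\dlam(y\cdot\dlam z)=T_0^{-1}\bigl[(T_0x\cg T_0y)\cg T_0z-T_0x\cg(T_0y\cg T_0z)\bigr]=T_0^{-1}\dM_\g\, l_3^\g(T_0x,T_0y,T_0z),$$
using associativity of $\frkg$, and then rewrites $T_0^{-1}\dM_\g=\dM^{\lambda}T_1^{-1}$ and $T_1^{-1}l_3^\g(T_0x,T_0y,T_0z)=l_3^{\lambda}(x,y,z)$ from the intertwining identities to land on $\dM^{\lambda}l_3^{\lambda}(x,y,z)$, which is (d) for the deformation. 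The same recipe --- replace every deformed operation by its $T$-conjugate, invoke the corresponding axiom of $\frkg$, then convert back --- dispatches (a), (b), (c), the relations $(e_1)$--$(e_3)$, and the Stasheff identity (f). Conceptually this is transport of structure along the isomorphism $(T_0,T_1,0)$, so $(\frkg;\dM^{\lambda},\cdot\dlam,l_3^{\lambda})$ is an associative $2$-algebra over $\K[[\lambda]]$; hence \eqref{Nijenhuis} yields a genuine second-order deformation.

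I expect the main obstacle to lie in the bookkeeping of the auxiliary step, namely the fact that the pullback operations are a priori infinite power series in $\lambda$ while \eqref{Nijenhuis} is polynomial (degree one in the products and differential, degree two in $l_3$). Proving that the series truncate is exactly what conditions (i)--(v) encode: truncation at order one forces $N_0\psi=0$, $N_0(x\cdot_N y)=N_0(x)\cg N_0(y)$, and the two mixed analogues, while truncation of $l_3$ at order two forces $N_1\,l_3^{N^2}(x,y,z)=l_3^\g(N_0x,N_0y,N_0z)$. Among the final axiom verifications the Stasheff identity (f) is the most computation-heavy, since it mixes $l_3^\g$ with the products and needs careful use of both the $l_3$-identity and the product identity above; but it follows the identical conjugation pattern once the intertwining identities are in place.
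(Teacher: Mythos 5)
The paper states this theorem without proof, so there is no argument to compare against line by line; your proof is correct and is precisely the converse of the computation the paper performs just before Definition \ref{defi:Nij}, where conditions \eqref{Nij0}--\eqref{Nij3} are obtained by expanding the homomorphism equations \eqref{eq:deform0}--\eqref{eq:deform3} with $T_2=0$ in powers of $\lambda$ --- your observation that the $\lambda^1$-coefficients reproduce \eqref{Nijenhuis} while the top coefficients are exactly (i)--(v) of Definition \ref{defi:Nij} is what makes $(T_0,T_1,0)$ an intertwiner, and the transport-of-structure step (invertibility of $T_0,T_1$ over $\K[[\lambda]]$ plus conjugation of each axiom (a)--(f)) correctly supplies the half of the argument the paper leaves implicit. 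One cosmetic remark: the last term of $\nu(a,x)$ in \eqref{Nijenhuis} (and of $a\cdot_N x$ above it) should read $-N_1(a\cg x)$ rather than $-N_1(a)\cg x$; this is a typo in the paper that your derivation silently corrects.
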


\section{ Abelian extensions of associative 2-algebras}

In this section, we study abelian extensions of associative 2-algebras using the cohomology theory. We show that associated to any abelian extension, there is a representation and a 2-cocycle.
Furthermore, abelian extensions can be classified by the second cohomology group.

\begin{defi}
Let $(\fg;\dM_\g,\cg,l_3^\g)$,  $(\widehat{\fg };\widehat{\dM},\chg,\widehat{l_3})$ be associative 2-algebras and
$i=(i_{0},i_{1}):\h\longrightarrow\widehat{\fg },~~p=(p_{0},p_{1}):\widehat{\fg }\longrightarrow\fg $
be strict homomorphisms. The following sequence of associative 2-algebras is a
short exact sequence if $\mathrm{Im}(i)=\mathrm{Ker}(p)$,
$\mathrm{Ker}(i)=0$ and $\mathrm{Im}(p)=\g$.

\begin{equation}\label{eq:ext1}
\CD
  0 @>0>>  \h_{{1}} @>i_1>> \widehat{\g}_{{1}} @>p_1>> \g_{{1}} @>0>> 0 \\
  @V 0 VV @V \dM_\h VV @V \widehat{\dM} VV @V\dM_\g VV @V0VV  \\
  0 @>0>> \h_{0} @>i_0>> \widehat{\g}_0 @>p_0>> \g_0@>0>>0
\endCD
\end{equation}
We call $\widehat{\fg }$  an extension of $\fg $ by
$\h$, and denote it by $\E_{\widehat{\g}}.$
It is called an abelian extension if $\h$ is abelian, i.e. if $\cdot_{\h}=0$ and $l^{\h}_3(\cdot,\cdot,\cdot)=0$. 
We will view $\h$ as subcomplex of $\hg$ directly, and omit the map $i$.
\end{defi}

 A splitting $\sigma:\fg \longrightarrow\widehat{\fg }$ of $p:\widehat{\fg }\longrightarrow\fg $
consists of linear maps
$\sigma_0:\fg_0\longrightarrow\widehat{\g}_0$ and
$\sigma_1:\fg_{{1}}\longrightarrow\widehat{\g}_{{1}}$
 such that  $p_0\circ\sigma_0=id_{\fg_0}$ and  $p_1\circ\sigma_1=id_{\fg_{{1}}}$.

 Two extensions of associative 2-algebras
 $\E_{\widehat{\g}}:0\longrightarrow\frkh\stackrel{i}{\longrightarrow}\widehat{\g}\stackrel{p}{\longrightarrow}\g\longrightarrow0$
 and $\E_{\tilde{\g}}:0\longrightarrow\frkh\stackrel{j}{\longrightarrow}\tilde{\g}\stackrel{q}{\longrightarrow}\g\longrightarrow0$ are equivalent,
 if there exists an associative $2$-algebra homomorphism $F:\widehat{\g}\longrightarrow\tilde{\g}$  such that $F\circ i=j$, $q\circ
 F=p$ and $F_2(i(u),\alpha)=0$, for any
 $u\in\frkh_0,~\alpha\in\widehat{\g}_0$.

Let $\widehat{\fg }$ be an abelian extension of $\fg $ by
$\frkh$, and $\sigma:\g\longrightarrow\widehat{\g}$ be a splitting.
Define the representation by
\begin{equation}\label{abelian:rep}
\left\{\begin{array}{rlclcrcl}
&\fg_{0}\otimes V&\longrightarrow& \frkh,&&x\trr (u+m)&\triangleq&\sigma_0(x)\cdot_{\hg} (u+m),\\
&V\otimes\fg_{0}&\longrightarrow& \frkh, &&(u+m)\trl x&\triangleq&(u+m)\cdot_{\hg} \sigma_0(x),\\
&\fg_{{1}}\otimes V_0&\longrightarrow&\frkh_1,&&a\trr u&\triangleq&\sigma_1(a)\cdot_{\hg}u,\\
&V_0\otimes \fg_{{1}}&\longrightarrow&\frkh_1,&&u\trl a &\triangleq&u \cdot_{\hg}\sigma_1(a),\\
&\fg_{0}\otimes\fg_{0}\otimes V_0&\longrightarrow&\frkh_1,&& (x,y)\trr u&\triangleq&\widehat{l_{3}}(\sigma_0(x),\sigma_0(y),u),\\
&\fg_{0}\otimes V_0 \otimes\fg_{0}&\longrightarrow&\frkh_1, &&x\trr u \trl y&\triangleq&\widehat{l_{3}}(\sigma_0(x),u,\sigma_0(y)),\\
&V_{0}\otimes \fg_0\otimes\fg_{0}&\longrightarrow&\frkh_1, && u\trl (x,y)&\triangleq&\widehat{l_{3}}(u,\sigma_0(x),\sigma_0(y)),
\end{array}\right.
\end{equation}
for any $x,y\in\fg_{0}$, $a\in\fg_{{1}}$,
$u\in\frkh_{0}$ and $m\in\frkh_{{1}}$.

\begin{pro}\label{pro:2-modules}
With the above notations, $V$ is a representation of associative 2-algebra $\g$. Furthermore,  this representation  does not depend on the choice of the splitting $\sigma$. Moreover,  equivalent abelian extensions give the same representation of $\g$ on $\h$.
\end{pro}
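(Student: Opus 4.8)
The plan is to push every defining relation of the ambient associative $2$-algebra $\widehat{\g}$ down to $\g$ and $\frkh$ along the splitting $\sigma$. Three features of an abelian extension drive all the computations. First, since $p$ is a strict homomorphism, $\frkh=\Ker(p)$ is a two-sided ideal, so $\alpha\chg u,\ u\chg\alpha\in\frkh$ for $u\in\frkh$, $\alpha\in\widehat{\g}$, and $\widehat{l_{3}}$ lands in $\frkh_1$ as soon as one of its arguments lies in $\frkh$. Second, since $\frkh$ is abelian, every product or triple product internal to $\frkh$ vanishes: $u\chg v=0$ and $\widehat{l_{3}}(u,v,w)=0$. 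Third, since $p_0\sigma_0=\id$, the defect $\sigma_0(x)\chg\sigma_0(y)-\sigma_0(x\cg y)$ lies in $\frkh_0$, and likewise $\widehat{\dM}\sigma_1(a)-\sigma_0(\dM_\g a)\in\frkh_0$.

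To show that $V$ is a representation I would verify the axioms one by one, each time selecting the matching identity among (a)--(f) of Definition \ref{defi:Leibniz2} for $\widehat{\g}$ and evaluating it on $\sigma$-images together with elements of $\frkh$. The model computation is the first axiom $(x\cg y)\trr u-x\trr(y\trr u)=\dM_{\frkh}((x,y)\trr u)$: apply (d) to the triple $(\sigma_0(x),\sigma_0(y),u)$, so that $\widehat{\dM}\widehat{l_{3}}(\sigma_0(x),\sigma_0(y),u)=(\sigma_0(x)\chg\sigma_0(y))\chg u-\sigma_0(x)\chg(\sigma_0(y)\chg u)$; replacing $\sigma_0(x)\chg\sigma_0(y)$ by $\sigma_0(x\cg y)$ is legitimate because the defect lies in $\frkh_0$ and $\frkh_0\chg\frkh_0=0$, and recognizing the remaining terms through the definitions in \eqref{abelian:rep} yields exactly the asserted identity. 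The remaining action axioms follow in the same manner from (a)--(c) and $(\mathrm{e}_1)$--$(\mathrm{e}_3)$, and the four trilinear relations from the Stasheff identity (f) and $(\mathrm{e}_1)$--$(\mathrm{e}_3)$, now evaluated on tuples with a single entry in $\frkh_0$; in each case the only real step is discarding the $\frkh$-valued defects, which is permitted because they are multiplied into $\frkh$ and vanish.

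For independence of the splitting, let $\sigma'=\sigma+\phi$ be a second splitting, so that $\phi_0:\g_0\to\frkh_0$ and $\phi_1:\g_1\to\frkh_1$ take values in the ideal. Each bilinear action is unchanged at once: for example $\sigma'_0(x)\chg u-\sigma_0(x)\chg u=\phi_0(x)\chg u=0$ since both factors lie in $\frkh_0$, and similarly for $\trl$ and for the $\g_1$-actions. For the trilinear maps the change is a sum of triple products of the shape $\widehat{l_{3}}(\phi_0(x),\sigma_0(y),u)$ and $\widehat{l_{3}}(\sigma_0(x),\phi_0(y),u)$, each carrying two arguments in $\frkh_0$, while the purely internal term $\widehat{l_{3}}(\phi_0(x),\phi_0(y),u)$ already vanishes by abelianness. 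The main obstacle is precisely here: one must show that $\widehat{l_{3}}$ annihilates every triple having two entries in $\frkh$. This is not formal---applying (d) or the Stasheff identity only shows such triple products are $\widehat{\dM}$-closed and relates them to one another---so I expect to have to extract the vanishing from the full abelian-ideal structure, combining $\frkh\chg\frkh=0$ with the structure equations of $\widehat{\g}$. Once that vanishing is established, the trilinear maps are splitting-independent as well.

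Finally, for equivalent extensions $\E_{\widehat{\g}}$ and $\E_{\tilde{\g}}$ with intertwining homomorphism $F$ satisfying $F\circ i=j$, $q\circ F=p$ and $F_2(i(u),\alpha)=0$, I would fix a splitting $\widehat\sigma$ of $p$ and use $\tilde\sigma:=F\circ\widehat\sigma$ as a splitting of $q$, which is legitimate because $q_0F_0=p_0$. Comparing the two representations term by term and using that $F$ restricts to the identity on $\frkh$ (from $F\circ i=j$), the discrepancy between the $\widehat{\g}$-action and the $\tilde{\g}$-action is governed by the homomorphism conditions (ii)--(iv) for $F$, whose right-hand sides are $F_2$-corrections; the normalization $F_2(i(u),\alpha)=0$ (together with its companion on the other slot) forces these corrections to vanish whenever an argument lies in $\frkh$. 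By the previous step the $\tilde{\g}$-representation computed from $\tilde\sigma$ agrees with the one computed from any splitting, so this comparison proves that the two extensions induce identical representations of $\g$ on $\frkh$.
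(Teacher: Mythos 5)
Your proposal follows essentially the same route as the paper: push the structure identities (a)--(f) of $\widehat{\g}$ through the splitting, discard all defect terms because they land in the abelian ideal $\h$, and for the last part transport a section of $p$ to a section of $q$ via $F$, invoking $F\circ i=j$ together with the normalization $F_2(i(u),\cdot)=0$. The one place you diverge is that you explicitly flag, and leave open, the vanishing of $\widehat{l_3}$ on triples with two entries in $\h_0$, which is exactly what is needed for splitting-independence of the trilinear components $(x,y)\trr u$, $x\trr u\trl y$, $u\trl(x,y)$. You are right that this is not formal: axiom (d) only yields $\widehat{\dM}\,\widehat{l_3}(\alpha,u,v)=(\alpha\chg u)\chg v-\alpha\chg(u\chg v)=0$ for $\alpha\in\widehat{\g}_0$ and $u,v\in\h_0$, i.e.\ closedness of the triple product, not its vanishing. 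The paper simply asserts $\widehat{l_{3}}(\cdot,u,v)=\widehat{l_{3}}(u,\cdot,v)=\widehat{l_{3}}(u,v,\cdot)=0$ at this point, even though its stated definition of abelian extension only requires $\cdot_{\h}=0$ and $l_3^{\h}=0$ on arguments all lying in $\h$; so the obstacle you identified is genuinely present in the source as well, and the clean resolution is to strengthen the definition of abelian extension to include this vanishing (as is customary for Lie $2$-algebras), rather than to try to derive it. Apart from making that hypothesis explicit, your argument coincides with the paper's.
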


\pf
 By the fact that $p$ is a strict homomorphism, it is easy to show that the  representation defined in \eqref{abelian:rep} are well-defined.

For direct computations, we have
 \begin{eqnarray*}
   &&(x\cg y)\trr (u+m)-x\trr (y\trr (u+m))\\
   &=&\sigma(x\cg y)\cdot_{\hg}(u+m)-\sigma(x)\cdot_{\hg}(\sigma(y)\cdot_{\hg}(u+m))\\
   &=&(\sigma(x)\cdot_{\hg}\sigma(y))\cdot_{\hg}(u+m)-\sigma(x)\cdot_{\hg}(\sigma(y)\cdot_{\hg}(u+m))\\
   &=&\dM_{\frkh}\widehat{l_3}(\sigma(x),\sigma(y),u)+\widehat{l_3}(\sigma(x),\sigma(y),\dM_{\frkh}m)\\
    &=&\dM_{\frkh}((x,y)\trr u)+(x,y)\trr \dM_{\frkh}m
 \end{eqnarray*}
which implies that
 \begin{eqnarray*}
  &&(x\cg y)\trr u-x\trr (y\trr u)=\dM_{\frkh}((x,y)\trr u),\\
  &&(x\cg y)\trr m-x\trr (y\trr m)=(x,y)\trr \dM_{\frkh}m.
 \end{eqnarray*}
Similarly, we have
 \begin{eqnarray*}
  &&( x\cg a)\trr u-x\trr(a\trr  u)=(x,\dM_\g (a))\trr u,\\
  &&( a\cg x)\trr u-a\trr(x\trr  u)=(\dM_\g (a),x)\trr u.
 \end{eqnarray*}
Next, we have
\begin{eqnarray*}
   &&(x\trr (u+m))\trl y-x\trr ((u+m)\trl y)\\
   &=&(\sigma(x)\chg (u+m)) \cdot_{\hg} \sigma(y)-\sigma(x)\cdot_{\hg}((u+m)\cdot_{\hg}\sigma(y))\\
   &=&\dM_{\frkh}\widehat{l_3}(\sigma(x),u,\sigma(y))+\widehat{l_3}(\sigma(x),\dM_{\frkh}m,\sigma(y))\\
   &=&\dM_{\frkh}(x\trr u\trl y)+x\trr (\dM_{\frkh}m) \trl y
\end{eqnarray*}
which implies that
 \begin{eqnarray*}
  &&(x\trr u)\trl y-x\trr (u\trl y)=\dM_{\frkh}(x\trr u\trl y),\\
&&(x\trr m)\trl y-x\trr (m\trl y)=x\trr (\dM_{\frkh}m) \trl y.
\end{eqnarray*}
Similarly, we have
 \begin{eqnarray*}
  &&(x\trr u)\trl a-x\trr (u\trl a)=x\trr u\trl \dM_{\frkh}(a),\\
  &&(a\trr u)\trl y-a\trr (u\trl y)= \dM_{\frkh}(a) \trr u\trl y.
\end{eqnarray*}
Also, we have
\begin{eqnarray*}
   &&   (u+m)\trl (x\cg y)-((u+m)\trl x)\trl y\\
   &=&(u+m) \cdot_{\hg}\sigma(x\cg y)-((u+m)\cdot_{\hg} \sigma(x))\cdot_{\hg}\sigma(y)\\
   &=&(u+m) \cdot_{\hg}(\sigma(x)\cdot_{\hg}\sigma(y))-((u+m)\cdot_{\hg} \sigma(x))\cdot_{\hg}\sigma(y)\\
   &=&-\dM_{\frkh}\widehat{l_3}(u,\sigma(x),\sigma(y))-\widehat{l_3}(\dM_{\frkh}m,\sigma(x),\sigma(y)),
\end{eqnarray*}
which implies that
 \begin{eqnarray*}
  &&u\trl (x\cg y)-(u\trl x)\trl y=\dM_{\frkh}(u\trl (x,y)),\\
  &&m\trl (x\cg y)-(m\trl x)\trl y=\dM_{\frkh}m\trl (x,y).
 \end{eqnarray*}
Similarly, we have
 \begin{eqnarray*}
  &&u\trl (x\cg a)-(u\trl x)\trl a=u\trl (x,\dM_{\fg} a),\\
    &&u\trl (a\cg x)-(u\trl a)\trl x=u\trl (\dM_{\fg} a, x).
 \end{eqnarray*}
By
\begin{eqnarray*}
&&\si (x)\chg\widehat{l}_3(\si (y),\si (z),u)+\widehat{l}_3(\si (x),\si (y),\si (z))\chg u \\
&=&\widehat{l}_3(\si (x)\cdot \si (y),\si (z),u)-\widehat{l}_3(\si (x),\si (y\cdot z),u)+\widehat{l}_3(\si (x),\si (y), \si (z)\cdot u),
\end{eqnarray*}
we have
\begin{eqnarray*}
x\trr((y,z)\trr u)+l_3^\g (x,y,z)\trr u =(x\cdot y,z)\trr u -(x, y\cdot z)\trr u+ (x,y)\trr (z\trr u).
\end{eqnarray*}
By
\begin{eqnarray*}
&&\si (x)\chg \widehat{l}_3(u,\si (y),\si (z))+\widehat{l}_3(\si (x),u,\si (y))\chg\si (z)\\
&=&\widehat{l}_3(\si (x)\cdot u,\si (y),\si (z))-\widehat{l}_3(\si (x), u\cdot \si (y),\si (z))+\widehat{l}_3(\si (x),u, \si (y)\cdot\si (z)),
\end{eqnarray*}
we have
\begin{eqnarray*}
&&x\trr (u  \trl (y\cdot z)) +(x \trr  u\trl y) \trl z= (x\trr u)  \trl (y, z)-x\trr (u\trl y)\trl z+x\trr u\trl (y\cdot z).
\end{eqnarray*}
By
\begin{eqnarray*}
&&\si (x)\chg\widehat{l}_3(\si (y),u,\si (z))+\widehat{l}_3(\si (x),\si (y),u)\chg\si (z)\\
&=&\widehat{l}_3(\si (x)\chg\si (y),u,\si (z))-\widehat{l}_3(\si (x), \si (y)\chg u,\si (z))+\widehat{l}_3(\si (x),\si (y), u\chg\si (z)),
\end{eqnarray*}
we have
\begin{eqnarray*}
 x\trr (y\trr u\trl z) +((x,y)\trr u)\trl z =(x\cdot y)\trr u\trl z  -x\trr (y\trr u)\trl z + (x,y)\trr (u\trl z).
\end{eqnarray*}
By
\begin{eqnarray*}
&&u\chg\widehat{l}_3(\si (x),\si (y),\si (z))+\widehat{l}_3(u,\si (x),\si (y))\chg\si (z)\\
&=&\widehat{l}_3(u\chg\si (x),\si (y),\si (z))-\widehat{l}_3(u, \si (x)\chg\si (y),\si (z))+\widehat{l}_3(u,\si (x), \si (y)\chg\si (z)),
\end{eqnarray*}
we have
\begin{eqnarray*}
u\trl l_3(x,y,z) + (u\trl (x,y))\trl z= (u\trl x) \trl (y,z) -u\trl (x\cdot y,z)+u\trl (x, y\cdot z).
 \end{eqnarray*}
 Thus we obtain a representation of associative 2-algebra.

Since $\h$ is abelian, we can show that the representations are independent of the choice of $\sigma$.
In fact, if we choose another splitting $\sigma':\g\to\hg$, then $p_0(\sigma(x)-\sigma'(x))=x-x=0$,
$p_1(\sigma(a)-\sigma'(a))=a-a=0$, i.e. $\sigma(x)-\sigma'(x)\in \Ker p_0=\h_0$,
$\sigma(a)-\sigma'(a)\in \Ker p_1=\h_{{1}}$.
Thus, $(\sigma(x)-\sigma'(x))\chg(u+m)=0$, $(\sigma(a)-\sigma'(a))\chg u=0$, which implies that  the representations $\trr$ are independent on the choice of $\sigma$. Similarly, $\trl$ are independent on the choice of $\sigma$ by the same reason.
We also have $\widehat{l_{3}}(\cdot,u,v)=0$, $\widehat{l_{3}}(u,\cdot,v)=0$, $\widehat{l_{3}}(u,v,\cdot)=0$ for any $u,v\in\frkh_0$, which implies that
$$\widehat{l_{3}}(\si' (x),\si' (y),u)=\widehat{l_{3}}(\si (x),\si (y),u), \quad \widehat{l_{3}}(u,\si' (x),\si' (y))=\widehat{l_{3}}(u,\si (x),\si (y)),$$
 $$\widehat{l_{3}}(\si' (x),u,\si' (y))=\widehat{l_{3}}(\si (x),u,\si (y)).$$
Therefore the representations are also independent on the choice of $\sigma$.

Suppose that $\E_{\widehat{\g}}$ and $\E_{\tilde{\g}}$ are equivalent abelian extensions, and $F:\widehat{\g}\longrightarrow\tilde{\g}$ is the associative 2-algebra homomorphism satisfying $F\circ i=j$, $q\circ
 F=p$ and $F_2(i(u),\alpha)=0$, for any
 $u\in\frkh_0,~\alpha\in\widehat{\g}_0$.
Choose linear sections $\sigma$ and $\sigma'$ of $p$ and $q$. Then we have $qF_0\si (x)=p\si (x)=x=q\si (x)$,
so $F_0\si (x)-\si'(x)\in \Ker q=\h_0$. Thus, we have
$$\si'(x)\chg (u+m)=F_0\si (x)\chg (u+m)=F_0(\si (x)\chg (u+m))=\si (x)\chg (u+m),$$
which implies that equivalent abelian extensions give the same $\rho^L_0$. Similarly, we can show that equivalent abelian extensions also give the same $\rho^R_0$, $\rho^L_1$, and $\rho^L_1$. At last, by the fact that $F=(F_0,F_1,F_2)$ is a homomorphism and $F_2(i(u),\cdot)=0$, we have
$$
\widehat{l_3}(\si (x),\sigma(y),u)=\widetilde{l_3}(F_0\sigma(x),F_0\sigma(y),u)=\widetilde{l_3}(\sigma'(x),\sigma'(y),u).
$$
Therefore, equivalent abelian extensions also give the same representations.
The proof is finished.
\qed\vspace{3mm}

Let $\sigma:\frkg\longrightarrow\widehat{\frkg}$  be a
splitting of the abelian extension \eqref{eq:ext1}. Define the following linear maps:
$$
\begin{array}{rlclcrcl}
\psi:&\frkg_{{1}}&\longrightarrow&\h_{0},&& \psi(a)&\triangleq&\widehat{\dM}\si(a)-\si(\dM_\frkg (a)),\\
\omega:&\otimes^2\frkg_{0}&\longrightarrow&\h_{0},&& \omega(x,y)&\triangleq&\si(x)\cdot_{\widehat{\frkg}}\si(y)-\si(x\cdot_\fg y),\\
\mu:&\frkg_{0}\otimes\frkg_{{1}}&\longrightarrow&\h_{{1}},&& \mu(x,a)&\triangleq&\si(x)\cdot_{\widehat{\frkg}}\si(a)-\si(x\cdot_\frkg a),\\
\nu:&\frkg_{{1}}\otimes\frkg_{0}&\longrightarrow&\h_{{1}},&& \nu(a,x)&\triangleq&\si(a)\cdot_{\widehat{\frkg}}\si(x)-\si(a\cdot_\fg x),\\
\theta:&\otimes^3\frkg_{0}&\longrightarrow&\h_{{1}},&&
\theta(x,y,z)&\triangleq&\widehat{l}_{3}(\si(x),\si(y),\si(z))-\si(l_{3}^\frkg(x,y,z)),
\end{array}
$$
for any $x,y,z\in\frkg_{0}$, $a\in\frkg_{{1}}$.

\begin{thm}\label{thm:2-cocylce}
Let $\E_{\hg}$ is an abelian extension of $\g$ by $\h$ given by \eqref{eq:ext1}, then $(\psi,\omega,\mu,\nu,\theta)$ is a $2$-cocycle of $\g$ with coefficients in $\frkh$.
\end{thm}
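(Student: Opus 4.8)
The plan is to verify directly the eight defining relations \eqref{eq:coc01}--\eqref{eq:coc08} of a $2$-cocycle for the maps $(\psi,\omega,\mu,\nu,\theta)$ attached to the splitting $\sigma$. The only structural input needed is that $\hg$ is an associative $2$-algebra, so that axioms (a)--(f) of Definition \ref{defi:Leibniz2} hold for $(\widehat{\dM},\chg,\widehat{l}_3)$, together with the same axioms for $\g$ and the representation formulas \eqref{abelian:rep}. The guiding observation is that each of the five maps records the failure of $\sigma$ to be a strict homomorphism on one piece of the structure; consequently, when they are assembled according to a cocycle relation, every term of the form $\sigma(\text{product})$ cancels against its partner, and what remains is exactly one instance of an axiom read in $\hg$ together with the $\sigma$-image of the corresponding axiom read in $\g$, both of which vanish. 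Throughout I use that $\h$ is a subcomplex of $\hg$, so that $\dM_V=\partial$ is the restriction of $\widehat{\dM}$, and that the representation sends $x\trr u=\sigma(x)\chg u$, $u\trl x=u\chg\sigma(x)$, $(x,y)\trr u=\widehat{l}_3(\sigma x,\sigma y,u)$, $x\trr u\trl y=\widehat{l}_3(\sigma x,u,\sigma y)$ and $u\trl(x,y)=\widehat{l}_3(u,\sigma x,\sigma y)$.

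I would organize the verification along the evident correspondence: \eqref{eq:coc01}, \eqref{eq:coc02}, \eqref{eq:coc03} against the differential axioms (a), (b), (c); \eqref{eq:coc04}, \eqref{eq:coc05}, \eqref{eq:coc06}, \eqref{eq:coc07} against the associator axioms (d), $(e_1)$, $(e_2)$, $(e_3)$; and \eqref{eq:coc08} against the Stasheff identity (f). As a representative case take \eqref{eq:coc01}. Substituting $x\trr\psi(a)=\sigma(x)\chg(\widehat{\dM}\sigma(a)-\sigma(\dM_\g a))$, $\psi(x\cg a)=\widehat{\dM}\sigma(x\cg a)-\sigma(\dM_\g(x\cg a))$, $\omega(x,\dM_\g a)=\sigma(x)\chg\sigma(\dM_\g a)-\sigma(x\cg\dM_\g a)$ and $\dM_V\mu(x,a)=\widehat{\dM}(\sigma(x)\chg\sigma(a))-\widehat{\dM}\sigma(x\cg a)$, the terms $\pm\sigma(x)\chg\sigma(\dM_\g a)$ and $\pm\widehat{\dM}\sigma(x\cg a)$ cancel, leaving
\[
\sigma(x)\chg\widehat{\dM}\sigma(a)-\widehat{\dM}\bigl(\sigma(x)\chg\sigma(a)\bigr)+\sigma\bigl(\dM_\g(x\cg a)-x\cg\dM_\g a\bigr),
\]
which vanishes by axiom (a) in $\hg$ (the first two terms) and axiom (a) in $\g$ (the $\sigma$-term). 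Relations \eqref{eq:coc02}--\eqref{eq:coc07} follow identically, using axioms (b), (c), (d), $(e_1)$, $(e_2)$, $(e_3)$ respectively; for the associator relations one additionally rewrites the $\theta$-, $\mu$-, $\nu$- and $(x,y)\trr(\cdot)$-terms through the three $\widehat{l}_3$-conventions of \eqref{abelian:rep}, so that the genuinely trilinear part collapses via the corresponding axiom in $\hg$ and the scalar remainder is the $\sigma$-image of that axiom in $\g$.

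The main obstacle is \eqref{eq:coc08}, the cocycle counterpart of the Stasheff identity, which carries by far the most terms and is where sign bookkeeping is delicate. Here I would expand $\theta(x,y,z)=\widehat{l}_3(\sigma x,\sigma y,\sigma z)-\sigma(l_3^\g(x,y,z))$ in all five $\theta$-summands on the left and expand $\mu,\nu,\omega$ on the right, then sort the resulting monomials into three families: the two ``derivation'' terms $\sigma(x)\chg\widehat{l}_3(\sigma y,\sigma z,\sigma t)$ and $\widehat{l}_3(\sigma x,\sigma y,\sigma z)\chg\sigma(t)$; the terms in which one slot of $\widehat{l}_3$ is filled by a product $\sigma(\cdot)\chg\sigma(\cdot)$ or by $\sigma(\cdot\,\cg\cdot)$, all produced by $\omega$ through the representation conventions; and the scalar terms of the form $\sigma(\text{expression in }l_3^\g)$. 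Applying the Stasheff identity (f) in $\hg$ converts the first family into slot-product $\widehat{l}_3$-terms that must cancel exactly against the $\omega$-produced ones, while applying the Stasheff identity (f) in $\g$ and then $\sigma$ collapses the scalar family to zero. The delicate point, and the one I would check most carefully, is the matching of relative signs: the $\widehat{l}_3$-slot conventions of \eqref{abelian:rep} together with the signs of the $\mu$-, $\nu$- and $\omega$-terms on the right must be lined up so that the slot-product terms annihilate and the leftover $\sigma$-images reassemble precisely into the Stasheff relation for $\g$. This is routine but lengthy, and is the step where an error in a sign or in the placement of a tensor slot would first surface.
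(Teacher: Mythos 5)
Your proposal is correct and follows essentially the same route as the paper: the paper also verifies each cocycle identity by pairing it with the corresponding axiom (a)--(f) of the associative $2$-algebra $\hg$ applied to $\sigma$-lifted elements, expanding each product as $\sigma(\text{product})$ plus the defect $(\psi,\omega,\mu,\nu,\theta)$, and using the representation formulas \eqref{abelian:rep} so that the $\sigma$-terms collapse via the same axiom in $\g$. The only cosmetic difference is direction: the paper starts from the identity in $\hg$ and derives the cocycle relation, while you start from the cocycle relation and reduce it to the identity in $\hg$ --- the same computation read backwards.
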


\pf
By the equality $\widehat{\dM}(\si (x)\cdot_{\widehat{\g}}\si (a))=\si (x)\cdot_{\widehat{\g}}\widehat{\dM}\si (a)$, we have
\begin{eqnarray*}
\dM_\frkh\mu(x,a)+\widehat{\dM}\si(x\cdot_\frkg a)&=&\si (x)\cdot_{\widehat{\g}}(\psi(a)+\si \dM_\frkg (a))\\
\dM_\frkh\mu(x,a)+\psi(x\cdot_\frkg a)+\si(x\cdot_\frkg\dM_\frkg (a))&=&\si (x)\chg\psi(a)+\si (x)\chg\si \dM_\frkg (a)
\end{eqnarray*}
Thus we  obtain that
\begin{eqnarray}\label{eq:c0}
x\trr \psi(a)+\omega(x,\dM_\fg a)=\psi(x\cdot_\fg a)+\dM_\frkh\mu(x,a).
\end{eqnarray}

Similarly, by the equality $\widehat{\dM}(\si (a)\chg \si (x))=\widehat{\dM}\si (a)\chg\si (x)$, we get
\begin{eqnarray*}
\dM_\frkh\nu(a,x)+\widehat{\dM}\si(a\cdot_\fg x)&=&(\psi(a)+\si \dM_\frkg (a))\chg \si (x)\\
\dM_\frkh\nu(a,x)+\psi(a\cdot_\fg x)+\si (\dM_\frkg (a)\cg x)&=&\psi(a)\chg\si (x)+\si \dM_\frkg (a)\chg\si (x).
\end{eqnarray*}
Thus we  obtain that
\begin{eqnarray}\label{eq:c1}
\psi(a)\trl x+\omega(\dM_\frkg (a),x)=\psi(a\cdot_\fg x)+\dM_\frkh\nu(a,x).
\end{eqnarray}

By the equality $\widehat{\dM}\si (a)\chg \si (b)=\si (a)\chg \widehat{\dM}\si (b)$, we obtain that
$$(\psi(a)+\si \dM_\frkg (a))\chg\si (b)=\si (a)\chg(\psi(b)+\si \dM_\frkg (b))$$
$$\psi(a)\chg\si (b)+\si \dM_\frkg (a)\chg \si (b)=\si (a)\chg \si \dM_\frkg (b)+ \si (a)\chg\psi(b)$$
\begin{eqnarray}
\psi(a)\trl b+\nu(\dM_\g (a),b)+\si(\dM_\g (a)\cg b)=a\trr (\psi(b))+\nu(a,\dM_\g (b))+\si(a\cg\dM_\g (b)).
\end{eqnarray}
\begin{eqnarray}\label{eqn:c2}
\psi(a)\trl b+\nu(\dM_\g (a),b)=a\trr \psi(b)+\nu(a,\dM_\g (b)).
\end{eqnarray}

By the equality
$$
(\si (x)\chg \si (y))\chg\si (z)-\si (x)\chg (\si (y)\chg\si (z))
=\widehat{\dM}\widehat{l_{3}}(\si (x),\si (y),\si (z)),
$$
we get
\begin{eqnarray}\label{eq:c3}
x\trr \omega(y,z)-\omega(x,y)\trl z+\omega(x, y\cdot_\fg z)-\omega(x\cdot_\fg y ,z)=\dM_\frkh\theta(x,y,z)+\psi(l_3^\g (x,y,z)).
\end{eqnarray}

We also have the equality
\begin{eqnarray*}
(\si (x)\chg\si (y))\chg\si (a)-\si (x)\chg(\si (y)\chg\si (a))=\widehat{l_{3}}(\si (x),\si (y),\widehat{\dM}(\si (a))).
\end{eqnarray*}
Consider the left hand side, we have
\begin{eqnarray*}
&&(\si (x)\chg\si (y))\chg\si (a)-\si (x)\chg(\si (y)\chg\si (a))\\
&=&(\sigma (x\cg y)+\omega(x,y))\chg\si (a)-\si (x)\chg \sigma(y\cg a+\mu(y,a))\\
&=&\sigma((x\cg y)\cg a)-\mu(x\cg y,a)-\omega(x,y)\trl a\\
&&-\sigma(x\cg (y\cg a))+\mu(x,(y\cg a))+x\trr \mu(y,a)\\
&=&\sigma l_3^\g (x,y,\dM_\g (a))-x\trr \mu(y,a)+\omega(x,y)\trl a\\
&&-\mu(x,y\cg a)+\mu(x\cg y,a).
\end{eqnarray*}
Consider the right hand side, we have
\begin{eqnarray*}
\widehat{l_{3}}(\si (x),\si (y),\widehat{\dM}(\si (a)))
&=&\widehat{l_{3}}(\si (x),\si (y),\sigma(\dM_\g (a))+\psi(a))\\
&=&\widehat{l_{3}}(\si (x),\si (y),\sigma(\dM_\g (a)))+\widehat{l_{3}}(\si (x),\si (y),\psi(a))\\
&=&\sigma l_3^\g (x,y,\dM_\g (a))+\theta(x,y,\dM_\g (a))-(x,y)\trr \psi(a).
\end{eqnarray*}
Thus, we have
\begin{eqnarray}
\nonumber&&\omega(x,y)\trl a-x\trr \mu(y,a)+\mu(x\cg y,a)-\mu(x,y\cg a)\\
&=&\theta(x,y,\dM_\g (a))-(x,y)\trr \psi(a).\label{eq:c4}
\end{eqnarray}

Similarly, by the equality
\begin{eqnarray*}
(\si (x)\chg\si (a))\chg\si (y)-\si (x)\chg(\si (a)\chg\si (y))=\widehat{l_{3}}(\si (x),\widehat{\dM}(\si (a)),\si (y)).
\end{eqnarray*}
we have
\begin{eqnarray*}
&&(\si (x)\chg\si (a))\chg\si (y)-\si (x)\chg(\si (a)\chg\si (y))\\
&=&(\sigma(x\cg a)+\mu(x,a))\chg\si (y)-\si (x)\chg\sigma(a\cg y)+\nu(a,y))\\
&=&\sigma(x\cg a)\cg y+\nu( x\cg a,y)-\mu(x,a)\trl y\\
&&-\sigma (x\cg (a\cg y))-\mu(x,(a\cg y))+x\trr \nu(a,y)\\
&=&\sigma l_3^\g (x,\dM_\g (a), y)-x\trr \nu(a,y)+\omega(x,a)\trl y\\
&&-\mu(x,(a\cg y))+\nu( x\cg a,y).
\end{eqnarray*}
\begin{eqnarray*}
\widehat{l_{3}}(\si (x),\widehat{\dM}(\si (a)),\si (y))
&=&\widehat{l_{3}}(\si (x),\sigma(\dM_\g (a))+\psi(a),\si (y))\\
&=&\widehat{l_{3}}(\si (x),\sigma(\dM_\g (a)),\si (y))+\widehat{l_{3}}(\si (x),\psi(a),\si (y))\\
&=&\sigma l_3^\g (x,\dM_\g (a),y)+\theta(x,\dM_\g (a),y)+x\trr\psi(a)\trl y.
\end{eqnarray*}
Thus, we have
\begin{eqnarray}
&&\omega(x,a)\trl y-x\trr \nu(a,y)+\nu( x\cg a,y)-\mu(x,a\cg y)=\theta(x,\dM_\g (a),y)+x\trr\psi(a)\trl y.\label{eq:c5}
\end{eqnarray}
Also, by definition we have
\begin{eqnarray*}
&&(\si (a)\chg\si (x))\chg\si (y)-\si (a)\chg(\si (x)\chg\si (y))\\
&=&(\sigma(a\cg x)+\nu(a,x))\chg\si (y)-\si (a)\chg(\sigma(x\cg y)+\omega(x,y))\\
&=&\sigma((a\cg x)\cg y)-\nu(a\cg x,y)-\nu(a,x)\trl y\\
&&-\sigma(a\cg (x\cg y))+\nu(a,x\cg y)+a\trr \omega(x,y)\\
&=&\sigma l_3^\g (a,\dM_\g x, y)-a\trr \omega(x,y)+\omega(a,x)\trl y\\
&&-\nu(a,(x\cg y))-\nu(a\cg x,y)+\mu(x,(a\cg y)).
\end{eqnarray*}
\begin{eqnarray*}
\widehat{l_{3}}(\widehat{\dM}(\si (a)),\si (x),\si (y))
&=&\widehat{l_{3}}(\sigma(\dM_\g (a))+\psi(a),\si (x),\si (y))\\
&=&\widehat{l_{3}}(\sigma(\dM_\g (a)),\si (x),\si (y))+\widehat{l_{3}}(\psi(a),\si (x),\si (y))\\
&=&\sigma l_3^\g (\dM_\g (a),x,y)+\theta(\dM_\g (a),x,y)+\psi(a)\trl (x,y).
\end{eqnarray*}
Thus, we have
\begin{eqnarray}
&&\omega(a,x)\trl y-a\trr \omega(x,y)+\nu(a\cg x,y)-\nu(a, x\cg y)=\theta(\dM_\g (a),x,y)-\psi(a)\trl(x,y). \label{eq:c6}
\end{eqnarray}

At last, by the Stasheff identity, we have
\begin{eqnarray*}
&&\si (x)\chg l_3(\si (y),\si (z),\si (t))+l_3(\si (x),\si (y),\si (z))\chg\si (t)\\
&=&\widehat{l_{3}}(\si (x)\chg\si (y),\si (z),\si (t))-\widehat{l_{3}}(\si (x),\si (y)\chg\si (z),\si (t))
+\widehat{l_{3}}(\si (x),\si (y),\si (z)\chg\si (t)).
\end{eqnarray*}
The left hand side is equal to
\begin{eqnarray*}
&&\si (x)\chg (\sigma l_3^\g (y,z,t)-\theta(y,z,t))+(\sigma l_3^\g (x,y,z)-\theta(x,y,z),\si (t))\chg\si (t)\\
&=&\sigma(x\cg l_3^\g (y,z,t))-\mu(x,l_3^\g (y,z,t))-x\trr \theta(y,z,t)\\
&&+\sigma(t\cg l_3^\g (x,y,z))-\nu(l_3^\g (x,y,z),t)-\theta(x,y,z)\trl t
\end{eqnarray*}
and the right hand side is equal to
\begin{eqnarray*}
&&\widehat{l_3}(\si(x\cg y)-\omega(x,y),\si (z),\si (t))+c.p\\
&=&\si l_3^\g (x\cg y,z,t)-\theta(x\cg y,z,t)-\omega(x,y)\trl (z,t)\\
&&-\si l_3^\g (x,y\cg z,t)+\theta(x,y\cg z,t)+x\trr \omega(y,z)\trl t\\
&&+\si l_3^\g (x,y, z\cg t)-\theta(x,y,z\cg t)-(x,y)\trr \omega(z,t)
\end{eqnarray*}
Thus, we obtain
\begin{eqnarray}
\nonumber&&x\trr \theta(y,z,t)+\theta(x,y,x)\trl t\\
\nonumber&&-\theta(x\cg y,z,t)+\theta(x,y\cg z,t)-\theta(x,y,z\cg  t)\\
\nonumber&=&\mu(x,l_3^\g (y,z,t))+\nu(l_3^\g (x,y,z),t)\\
\label{eq:c7}&&-\omega(x,y)\trl (z,t)+x\trr \omega(y,z)\trl t-(x,y)\trr \omega(z,t).
\end{eqnarray}
By \eqref{eq:c0}-\eqref{eq:c7}, we deduce that
$(\psi,\omega,\mu,\nu,\theta)$ is a 2-cocycle.\qed\vspace{3mm}

Now we can construct an associative 2-algebra structure  on $\g\oplus\frkh$ using the 2-cocycle given above. More precisely, we have
\begin{equation}\label{eq:multiplication}
\left\{\begin{array}{rcl}
\dM_{\gh}(a+m)&\triangleq&\dM_\g (a)+\psi(a)+\dM_\frkh(a),\\
{(x+u)\cdot_{\gh} (y+v)}&\triangleq&x\cg y+\omega(x,y)+x\trr v+u\trl y,\\
{(x+u)\cdot_{\gh} (a+m)}&\triangleq& x\cg a+\mu(x,a)+x\trr m+u\trl a,\\
{(a+m)\cdot_{\gh} (x+u)}&\triangleq&a\cg x+\nu(a,x)+a\trr u+m\trl x,\\
l_3^{\gh}(x+u,y+v,z+w)&\triangleq& l_3^\g (x,y,z)+\theta(x,y,z)\\
&&+(x,y)\trr w+x\trr v\trl z+u\trl (y,z),
\end{array}\right.
\end{equation}
for any $x,y,z\in\fg_{0}$, $a\in\fg_{{1}}$,
$u\in\frkh_{0}$ and $m\in\frkh_{{1}}$. Thus any extension $E_{\widehat{\g}}$ given by \eqref{eq:ext1} is
isomorphic to
\begin{equation}\label{ext2}
\CD
  0 @>0>>  \frkh_{{1}} @>i_1>> \g_{{1}}\oplus \frkh_{{1}} @>p_1>> \g_{{1}} @>0>> 0 \\
  @V 0 VV @V \dM_\frkh VV @V \widehat{\dM} VV @V\dM_\g VV @V0VV  \\
  0 @>0>> \frkh_{0} @>i_0>> \g_0\oplus \frkh_0 @>p_0>> \g_0@>0>>0,
\endCD
\end{equation}
where the associative 2-algebra structure on $\frkg\oplus\frkh$ is given by
\eqref{eq:multiplication}, $(i_0,i_1)$ is the inclusion and $(p_0,p_1)$ is the projection. We
denote the extension \eqref{ext2} by $\E_{\frkg\oplus\frkh}$.

In the sequel, we study the relation between equivalent classes of abelian extensions and the second cohomology group $\mathbf{H}^2(\frkg;{V})$.

\begin{thm}\label{mainthm44}
There is a one-to-one correspondence between equivalence classes of abelian extensions of the associative 2-algebras $\frkg$ by $\h$ and the second cohomology group $\mathbf{H}^2(\frkg;{V})$.
\end{thm}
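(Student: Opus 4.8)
The plan is to fix once and for all the representation of $\g$ on $\h$ attached to the extension, which by Proposition \ref{pro:2-modules} depends only on the equivalence class, and then to construct mutually inverse maps between the equivalence classes of abelian extensions inducing this representation and $\mathbf{H}^2(\g;{V})$. In one direction I send an abelian extension $\E_{\hg}$, together with a splitting $\si$, to the class $[(\psi,\omega,\mu,\nu,\theta)]$ of the $2$-cocycle produced in Theorem \ref{thm:2-cocylce}. In the other direction I send a $2$-cocycle to the extension $\E_{\gh}$ of \eqref{ext2}, whose structure \eqref{eq:multiplication} satisfies axioms $(a)$--$(f)$ of Definition \ref{defi:Leibniz2}; this last verification is just the converse reading of the computation proving Theorem \ref{thm:2-cocylce}, in which each cocycle identity \eqref{eq:coc01}--\eqref{eq:coc08} matches exactly one of the structure equations for $\gh$.

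First I would show the map out of extensions is well defined. For independence of the splitting, two splittings differ by $\phi=\si_0'-\si_0\colon\g_0\to\h_0$ and $\varphi=\si_1'-\si_1\colon\g_1\to\h_1$, both valued in $\h$. Expanding the defining formulas for $\psi,\omega,\mu,\nu,\theta$ at $\si'$ and using that $\h$ is abelian---so that products of two elements of $\h$ vanish and $\widehat{l}_3$ is killed as soon as two arguments lie in $\h_0$---every cross term collapses to an action of $\g$ on $\h$, and one reads off that $(\psi',\omega',\mu',\nu',\theta')-(\psi,\omega,\mu,\nu,\theta)=D_1(\phi,\varphi,0)$, a $2$-coboundary. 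For independence of the representative extension, if $F\colon\hg\to\tg$ realizes an equivalence then $F\circ\si$ is a splitting of $q$, and comparing the cocycle it yields in $\tg$ with the one $\si$ yields in $\hg$, the homomorphism identities $\rm(ii)$--$\rm(iv)$ of Definition \ref{defi:Lie-2hom} together with $F_0|_{\h}=\id$ show that the two cocycles differ by the coboundary $D_1(0,0,-F_2\circ(\si_0\times\si_0))$. Combined with splitting-independence inside $\tg$, the two extensions determine the same class.

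Next I would show cohomologous cocycles give equivalent extensions, which makes the reverse map well defined. Given $(\psi',\omega',\mu',\nu',\theta')=(\psi,\omega,\mu,\nu,\theta)+D_1(\phi,\varphi,\chi)$, I define $F\colon\gh\to\gh$ between the two structures \eqref{eq:multiplication} by $F_0(x+u)=x+u+\phi(x)$, $F_1(a+m)=a+m+\varphi(a)$ and $F_2(x,y)=\chi(x,y)$. That $F$ is an associative $2$-algebra homomorphism is precisely the system of conditions $\rm(i)$--$\rm(iv)$ of Definition \ref{defi:Lie-2hom}, and each of these unwinds to one component of the relation $(\psi',\ldots,\theta')-(\psi,\ldots,\theta)=D_1(\phi,\varphi,\chi)$; the equivalence requirements $F\circ i=i$, $p\circ F=p$ and $F_2(i(u),\cdot)=0$ hold by construction, since $\phi,\varphi,\chi$ land in $\h$ and vanish on $\h$.

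Finally I would check the two maps are mutually inverse. Starting from a cocycle, the canonical splitting $\si(x)=x$, $\si(a)=a$ of $\E_{\gh}$ recovers, through \eqref{eq:multiplication}, exactly the cocycle one began with, so one composite is the identity of $\mathbf{H}^2(\g;{V})$. Starting from an extension $\E_{\hg}$ with splitting $\si$, the assignment $x+u\mapsto\si_0(x)+u$, $a+m\mapsto\si_1(a)+m$ is an isomorphism of extensions $\E_{\gh}\cong\E_{\hg}$---this is the identification already recorded before \eqref{ext2}---so the other composite is the identity on equivalence classes. I expect the main obstacle to be the verification in the third step: checking homomorphism condition $\rm(iv)$, which couples $l_3$, the component $F_2=\chi$, and the $\theta$-part of $D_1$, is the longest computation and is exactly where the precise form of the coboundary operator on the $l_3$-slot is needed.
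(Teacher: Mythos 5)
Your proposal is correct and follows essentially the same route as the paper: both reduce the theorem to the equivalence ``two standard-form extensions on $\g\oplus\h$ are equivalent if and only if their cocycles $(\psi,\omega,\mu,\nu,\theta)$ differ by a coboundary $D_1(\lambda_0,\lambda_1,\lambda_2)$,'' proved by unwinding the homomorphism conditions $\rm(i)$--$\rm(iv)$ of Definition \ref{defi:Lie-2hom} in one direction and constructing $F$ from $(\lambda_0,\lambda_1,\lambda_2)$ in the other. The only difference is bookkeeping: you make explicit the splitting-independence and the check that the two maps are mutually inverse, which the paper leaves implicit in its identification of an arbitrary extension with the standard form \eqref{ext2}.
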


\pf
Let $\E'_{\gh}$ be another abelian extension determined by the 2-cocycle $(\psi',\omega',\mu',\nu',\theta')$. Denote the corresponding associative 2-algebra structure on $\g\oplus \h$ by $(\dM',\cdot',l_3')$. We only need to show that $\E_{\gh}$ and $\E'_{\gh}$ are equivalent if and only if 2-cocycles  $(\psi,\omega,\mu,\nu,\theta)$ and $(\psi',\omega',\mu',\nu',\theta')$ are in the same cohomology class.

If $\E_{\gh}$ and $\E'_{\gh}$ are equivalent, let
$F=(F_0,F_1,F_2):\E_{\gh}\longrightarrow\E'_{\gh}$
be the corresponding homomorphism.

 Since $F$ is an equivalence of extensions, we have
$$F_2(u,v)=0,\quad F_2(x,u)=0,\quad F_2(x,y)\in\frkh_{{1}},$$
and there exist two linear maps $\lambda_0:\g_0\longrightarrow\frkh_0$
and $\lambda_1:\g_{{1}}\longrightarrow\frkh_{{1}}$ such that
 $$F_0(x+u)=x+\lambda_0(x)+u,\quad
F_1(a+m)=a+\lambda_1(a)+m.$$ Set  $\lambda_2=F_2|_{\otimes^2\g_0}$.

First, by the equality
\begin{eqnarray*}
\label{eqn:fi} \dM'F_1(a)&=&F_0\dM_{\gh}(a),
\end{eqnarray*}
$$\dM'(a+\lambda_1(a))=\dM_\g (a)+\lambda_0(\dM_\g (a))+\psi(a)$$
we have
\begin{equation}\label{eq:exact1}
\psi(a)-\psi'(a)=\dM_\frkh \lambda_1(a)-\lambda_0(\dM_\g (a)).
\end{equation}

 Furthermore, we have
$$F_0(x\cdot_{\gh} y)-F_0(x)\cdot' F_0(y)=\dM'F_2(x,y),$$
$$(x\cdot y)+\omega(x,y)+\lambda_0(x\cdot y)-(x+\lambda_0(x))\cdot' (y+\lambda_0(y))=\dM_\frkh\circ \lambda_2(x,y),$$
$$(x\cdot y)+\omega(x,y)+\lambda_0(x\cdot y)-(x\cdot y)-\omega'(x,y)-x\trr \lambda_0(y)-\rho^R_0(y)\lambda_0(x)=\dM_\frkh\circ \lambda_2(x,y),$$
which implies that
\begin{equation}\label{eq:exact2}
\omega(x,y)-\omega'(x,y)
=x\trr \lambda_0(y)-\lambda_0(x)\trl y-\lambda_0(x\cg y)+\dM_\frkh\circ
\lambda_2(x,y).
\end{equation}

Similarly, by $F_1(x\cdot a)-F_0(x)\cdot' F_1(a)=F_2(x,\widehat{\dM}a)$,
we get
\begin{equation}\label{eq:exact3}
\mu(x,a)-\mu'(x,a)=x\trr \lambda_1(a)-\lambda_0(x)\trl a-\lambda_1(x\cg a)+\lambda_2(x,\dM_\g (a)),
\end{equation}
and by $F_1(a\cdot x)-F_1(a)\cdot' F_0(x)=F_2(\widehat{\dM}a,x)$, we get
\begin{equation}\label{eq:exact32}
\nu(a,x)-\nu'(a,x)=a\trr \lambda_1(x)-\lambda_0(a)\trl x-\lambda_1(a\cg x)+\lambda_2(\dM_\g (a),x).
\end{equation}

At last, by the equality
\begin{eqnarray*}
&&F_1l_3^{\gh}(x,y,z)-l_3'(F_0(x),F_0(y),F_0(z))\\
&=&F_2(x, y\cg z)- F_2(x\cg y,z) + F_0(x)\cdot_{\g'} F_2(y,z) - F_2(x,y)\cdot_{\g'} F_0(z).
\end{eqnarray*}
the left hand side is equal to
\begin{eqnarray*}
&&F_1(l_3^{\g}(x,y,z)+\theta(x,y,z))-l_3'(x+\lambda_0(x),y+\lambda_0(y),z+\lambda_0(z))\\
&=&l_3^{\g}(x,y,z)+\theta(x,y,z)+\lambda_1(l_3^\g (x,y,z))\\
&&-\{l_3^{\g}(x,y,z)+\theta'(x,y,z)-(x,y)\trr \lambda_0(z)-x\trr\lambda_0(y)\trl z-\lambda_0(x)\trl(y,z)\},
\end{eqnarray*}
the right hand side is equal to
\begin{eqnarray*}
&&\lambda_2(x, y\cdot z)- \lambda_2(x\cdot y,z) \\
&&+(x+\lambda_0(x))\cdot' \lambda_2(y,z)- \lambda_2(x,y)\cdot' (z+\lambda_0(z))\\
&=&\lambda_2(x, y\cdot z+\omega(y,z))- \lambda_2(x\cdot y+\omega(x,y),z)\\
&&+x\trr \lambda_2(y,z)-\lambda_2(x,y)\trl z)\\
&=&\lambda_2(x, y\cdot z)- \lambda_2(x\cdot y,z)\\
&&+x\trr \lambda_2(y,z)-\lambda_2(x,y)\trl z.
\end{eqnarray*}
Thus, we have
\begin{eqnarray}
\nonumber(\theta-\theta')(x,y,z)&=&x\trr \lambda_2(y,z)-\lambda_2(x,y)\trl z+\lambda_2(x, y\cdot z)- \lambda_2(x\cdot y,z)\\
\nonumber&&-\lambda_1l_3^\g (x,y,z)+(x,y)\trr \lambda_0(z)\\
\label{eq:exact4}&&+x\trr \lambda_0(y)\trl z+\lambda_0(x)\trl (y,z)
\end{eqnarray}
By \eqref{eq:exact1}-\eqref{eq:exact4}, we deduce that $(\psi,\omega,\mu,\nu,\theta)-(\psi',\omega',\mu',\nu',\theta')=D(\lambda_0,\lambda_1,\lambda_2)$. Thus, they are in the same cohomology class.

Conversely, if  $(\psi,\omega,\mu,\nu,\theta)$ and $ (\psi',\omega',\mu',\nu',\theta')$ are in the same cohomology class, assume that $(\psi,\omega,\mu,\nu,\theta)-(\psi',\omega',\mu',\nu',\theta')=D(\lambda_0,\lambda_1,\lambda_2)$. Then define $(F_0,F_1,F_2)$ by
$$F_0(x+u)=x+\lambda_0(x)+u,\quad
F_1(a+m)=a+\lambda_1(a)+m,\quad F_2(x+u,y+v)=\lambda_2(x,y). $$
Similar as the above proof, we can show that $(F_0,F_1,F_2)$ is an equivalence. We omit the details.
\qed

\section{Applications}
We provide a special case of the representation and cohomology theory of associative 2-algebras as well as some applications.
A associative 2-algebra with $l_3=0$ is called strict associative 2-algebra.

%

This lead to the concept of crossed modules over associative algebras.

\begin{defi}
 A crossed module over associative algebra $\pp$ is a  triple $(\hh,\pp, f)$  where $\pp$  is an associative algebra,
$\hh$ is a representation of $\pp$ such that for any $x\in\pp$, $a, b\in\hh$, the map  ${f}:\hh\longrightarrow\pp$ satisfying the following conditions:
\begin{eqnarray}
 \label{crossed01} {f} (x\cdot {a})&=&x\cdot {f}({a}),\quad {f}({a}\cdot x)={f}({a})\cdot x,\\
  \label{crossed02}~{f}({a})\cdot {b}&=&{a}\cdot {{f}({b})}.
\end{eqnarray}
The map ${f}:\hh\longrightarrow\pp$ is called an equivariant map if it satisfying \eqref{crossed01}.
\end{defi}

We remark that our definition is different from the concept of  crossed module of associative algebras which require $(\hh,\cdot_\hh)$ to be an associative algebra, an action of  $\pp$ on $\hh$,  and a further condition
$${a}\cdot_\hh {b}={f}({a})\cdot {b}={a}\cdot {f}({b}).$$
In fact, we find these conditions are not necessary for our strict associative 2-algebras.

\begin{pro}\label{thm:correspondence}
There is a one-to-one correspondence between strict associative 2-algebras and crossed modules over associative algebras.
\end{pro}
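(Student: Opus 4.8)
The plan is to establish the correspondence in both directions and check that the structure axioms match up. Recall that a strict associative 2-algebra is an associative 2-algebra $(\fg;\dM_\g,\cg,l_3^\g)$ with $l_3^\g=0$, so the data reduces to the complex $\dM_\g:\fg_1\to\fg_0$ and the bilinear products $\cg$ on the various graded pieces. First I would observe that the product $\cg:\fg_0\otimes\fg_0\to\fg_0$ makes $\fg_0$ into an associative algebra: this is precisely condition (d) of Definition \ref{defi:Leibniz2} once $l_3^\g=0$, since it forces $(x\cg y)\cg z=x\cg(y\cg z)$. Next I would check that the remaining products $\cg:\fg_0\otimes\fg_1\to\fg_1$ and $\cg:\fg_1\otimes\fg_0\to\fg_1$ equip $\fg_1$ with the structure of a representation (bimodule) of the associative algebra $\fg_0$; this is exactly what conditions $(e_1)$, $(e_2)$, $(e_3)$ give when $l_3^\g=0$, as these become the three bimodule axioms listed after the definition of a representation.

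Having identified $\pp=\fg_0$, $\hh=\fg_1$, and the equivariant map $f=\dM_\g:\fg_1\to\fg_0$, I would then match the crossed-module conditions \eqref{crossed01} and \eqref{crossed02} against the structure equations. Condition (a), $\dM_\g(x\cg a)=x\cg\dM_\g a$, together with (b), $\dM_\g(a\cg x)=\dM_\g a\cg x$, says exactly that $f$ is equivariant, i.e. \eqref{crossed01} holds. Condition (c), $\dM_\g a\cg b=a\cg\dM_\g b$, is precisely the Peiffer-type identity \eqref{crossed02}, $f(a)\cg b=a\cg f(b)$. Thus from a strict associative 2-algebra I recover a crossed module $(\hh,\pp,f)=(\fg_1,\fg_0,\dM_\g)$, and conditions $(d)$, $(e_1)$, $(e_2)$, $(e_3)$ are automatically satisfied (in fact, one must verify that $(e_1)$--$(e_3)$ with $l_3^\g=0$ reduce to the bimodule axioms rather than to extra constraints, which is the content of the remark immediately following Definition \ref{defi:Leibniz2}). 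Note also that condition (f), the Stasheff identity, becomes vacuous when $l_3^\g=0$, so nothing further is required.

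Conversely, given a crossed module $(\hh,\pp,f)$, I would set $\fg_0=\pp$, $\fg_1=\hh$, $\dM_\g=f$, define $\cg$ on $\fg_0\otimes\fg_0$ to be the multiplication of $\pp$ and on the mixed pieces to be the bimodule action, and put $l_3^\g=0$. Then I would verify, running the six conditions in reverse, that this data satisfies all the axioms of Definition \ref{defi:Leibniz2}: associativity of $\pp$ gives (d), the bimodule axioms give $(e_1)$--$(e_3)$, equivariance \eqref{crossed01} gives (a) and (b), the Peiffer identity \eqref{crossed02} gives (c), and (f) holds trivially. Finally I would check that these two assignments are mutually inverse and that they respect morphisms, so the correspondence is genuinely one-to-one.

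The main obstacle I anticipate is \emph{not} any single deep computation but rather the bookkeeping in verifying that conditions $(e_1)$, $(e_2)$, $(e_3)$ collapse exactly to the three bimodule axioms when $l_3^\g=0$ and involve no hidden dependence on $\dM_\g$; one must be careful that the appearances of $\dM_\g a$ inside these identities are consumed correctly by equivariance rather than producing spurious relations. Once that translation is pinned down, the rest is a direct matching of axioms in both directions.
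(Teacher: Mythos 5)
Your proposal follows essentially the same route as the paper's proof: identify $\pp=\fg_0$, $\hh=\fg_1$, $f=\dM_\g$, read off associativity from (d), the bimodule axioms from $(e_1)$--$(e_3)$, equivariance from (a)--(b), and the Peiffer-type identity from (c), with (f) vacuous, and then reverse the construction. The argument is correct and matches the paper's sketch.
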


\pf
We give a sketch of the proof.
Let $\fg_1\stackrel{\dM_\g}{\longrightarrow} \fg_0$ be a  strict associative 2-algebras, define $\hh=\fg_1$, $\pp=\fg_0$, and the following
 multiplication in $\pp$:
\begin{eqnarray*}
~x\cdot y&=&l_2^\g(x,y),\quad\forall~x,y\in A_0.
\end{eqnarray*}
By condition $(d)$, we obtain that $(\pp, \cdot_\pp)$ is an associative algebra.
Also by condition $(e_1)$--$(e_3)$,  we get  a representation of $\pp$ on $\hh$ by
$$
x\cdot a=l_2^\g(x,a),\quad a\cdot x=l_2^\g(a,x),\quad \forall~ x\in \pp, a\in\hh.
$$
Let $f=\dM$, from  conditions $(a), (b)$ and $(c)$,  we obtain a crossed module over associative algebra.

Conversely,  a crossed module over associative algebra $\pp$
 gives rise to a strict associative 2-algebra with $\dM_\g={f}$,
$\fg_1=\hh$ and $\fg_0=\pp$, where the multiplications are given by:
\begin{eqnarray*}
~ l_2^\g(x,y)&\triangleq&x\cdot y,\\
~l_2^\g(x,a)&\triangleq&x\cdot a,\qquad l_2^\g(a,x)\triangleq a\cdot x.
\end{eqnarray*}
Now the crossed module conditions  \eqref{crossed01} and  \eqref{crossed02} are exactly conditions $(a), (b)$ and $(c) $.
Thus  a crossed module over associative algebra give rise to a strict associative 2-algebra.
\qed

%

\begin{defi}
Let $(\hh,\pp, f)$ be a crossed module over associative algebra.
A representation of $(\hh,\pp,f)$ is an object $(V,W,\varphi)$   such that the following conditions are satisfied:

(i)  $V $ and $W $  are representations of  $\pp$  respectively;

(ii) $\varphi:V \to W $ is an equivariant map:
\begin{eqnarray}\label{deflm01}
\varphi(x\cdot v)=x\cdot  \varphi(v),\quad \varphi(v\cdot x)= \varphi(v)\cdot  x;
\end{eqnarray}

(iii) there exists linear maps $\trl: W\otimes \hh\to V$ and $\trr: \hh\otimes W\to V$ such that:
\begin{eqnarray}\label{deflm02}
\varphi (w\trl a)=w\cdot f(a),\quad\varphi (a\trr w)= f(a)\cdot w,
\end{eqnarray}
and
 \begin{eqnarray}\label{deflm03}
f(a)\cdot v'={a}\trr\varphi(v'), \quad \varphi(v)\trl {a}'=v\cdot f(m');
 \end{eqnarray}

(iv) these structure satisfying the following compatibility conditions
\begin{eqnarray}
\label{deflm11}
x \cdot (w\trl a)=(x\cdot w)\trl a+w \trl(x\cdot a),\\
\label{deflm12}
 w\trl  (x\cdot a)= (w\cdot x)\trl a+ x\cdot  (w\trl a),\\
 \label{deflm13}
 a\trr (w\cdot x)= (a\trr w)\cdot x + w\trl (a\cdot x),\\
\label{deflm14}
a\trr (x\cdot w)=(a\cdot x) \trr w+x\trr (a\trr w),\\
\label{deflm15}
x \cdot (a\trr w)=(x\cdot a)\trr w+a\trr (x\cdot w),\\
\label{deflm16}
 w\trl  (a\cdot x)= (w\trl a)\cdot x+ a\trr  (w\cdot x),
\end{eqnarray}
where $x\in \pp$, $a\in \hh$, $v\in V$ and  $w\in W$.
\end{defi}

For example, let $(V,W,\varphi)=(\hh,\pp, f)$, then we get the adjoint representation of $(\hh,\pp, f)$ on itself as follows:
$\hh$ and $\pp$ are representations of $\pp$ in a natural way,
 $\varphi=f$ is an equivariant map:
\begin{eqnarray}
f(x\cdot a)=x\cdot f(a),
\end{eqnarray}
and there exists  maps $\trr:\hh\otimes \pp\to \hh, {a}\trr x\triangleq {a}\cdot x$ and $\trl:\pp\otimes \hh\to \hh, x\trl {a}\triangleq x\cdot {a}$ such that
\begin{eqnarray}
f({a}\trr x)=f({a}\cdot x)=f({a})\cdot x.
\end{eqnarray}

We construct semidirect product of a crossed module over associative algebra $(\hh,\pp, f)$ with its representation $(V,W,\varphi)$.
\begin{pro}\label{pro:semidirectproduct}
  Given a representation of crossed module over associative algebra $(\hh,\pp, f)$ on $(V,W,\varphi)$. Define on $(\hh\oplus V,\pp\oplus W,\widehat{f}=f+\varphi)$
  the following maps
\begin{equation}
\left\{\begin{array}{rcl}
\widehat{f}({a}+v)&\triangleq&{f}(a)+\varphi(v),\\
{(x+w)\cdot (x'+w')}&\triangleq&x\cdot x'+x\cdot w'+w\cdot x',\\
{(x+w){\cdot} ({a}+v)}&\triangleq& x\cdot {a} +x\cdot v+ w\trl {a},\\
{({a}+v){\cdot} (x+w)}&\triangleq& {a}\cdot x +v\cdot x+ {a}\trr w,
\end{array}\right.
\end{equation}
for all $x, x'\in \pp$, ${a},{a}'\in \hh$, $v,v'\in V$ and  $w,w'\in W$.
 Then $(\hh\oplus V,\pp\oplus W, \widehat{f})$ is a crossed module over associative algebra, which is called the semidirect product of $(\hh,\pp, f)$ and $(V,W,\varphi)$.
\end{pro}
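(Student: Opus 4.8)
The plan is to verify directly that the triple $(\hh\oplus V,\pp\oplus W,\widehat{f})$ satisfies the three defining requirements of a crossed module over an associative algebra: first that $\pp\oplus W$ is an associative algebra, second that $\hh\oplus V$ is a representation of it, and third that $\widehat{f}=f+\varphi$ is an equivariant map satisfying the Peiffer-type identity \eqref{crossed02}. Throughout I would write every element of the enveloping algebra as $X=x+w\in\pp\oplus W$ and every module element as $A=a+v\in\hh\oplus V$, and check each identity componentwise, reading off the $\pp$/$\hh$-valued part and the $W$/$V$-valued part separately.

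For the associativity of $\pp\oplus W$, the product $(x+w)\cdot(x'+w')=x\cdot x'+(x\cdot w'+w\cdot x')$ is the classical semidirect product of $\pp$ with the square-zero ideal $W$; its associativity is exactly the statement that $W$ is a bimodule over $\pp$, so it follows from the three representation axioms recorded at the start of Section~\ref{sec:cohom}, with all products internal to $W$ vanishing. No new hypotheses are needed here.

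The heart of the argument, and the step I expect to be the main obstacle, is showing that $\hh\oplus V$ is a representation of $\pp\oplus W$. Here I would verify the three bimodule axioms $(X\cdot Y)\cdot A=X\cdot(Y\cdot A)$, $\;X\cdot(A\cdot Y)=(X\cdot A)\cdot Y$ and $A\cdot(X\cdot Y)=(A\cdot X)\cdot Y$. In each, the diagonal contributions — those in which only $\pp$ acts on $\hh$, or only $\pp$ acts on $V$ — collapse onto the already-known bimodule axioms for $\hh$ and for $V$ over $\pp$. What survives are the mixed contributions, in which an $\hh$-factor meets a $W$-factor through the new operations $\trl\colon W\otimes\hh\to V$ and $\trr\colon\hh\otimes W\to V$; these residual terms are precisely what the six compatibility conditions \eqref{deflm11}--\eqref{deflm16} are designed to absorb, with the $\trl$-relations feeding the left/middle axioms and the $\trr$-relations the middle/right axioms. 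The delicate point, and where any error would hide, is the bookkeeping: one must track the ordering and placement of the $W$-factors relative to the single $\hh$-factor so that each leftover term is cancelled by the correct compatibility relation, since in the semidirect product it is only the full sum of mixed terms, not the individual summands, that is constrained.

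Finally, for the map $\widehat{f}=f+\varphi$ I would check the two equivariance identities \eqref{crossed01} and the Peiffer identity \eqref{crossed02}. The identity $\widehat{f}(X\cdot A)=X\cdot\widehat{f}(A)$ splits on its $\pp$- and $W$-components into the original equivariance $f(x\cdot a)=x\cdot f(a)$, the relation $\varphi(x\cdot v)=x\cdot\varphi(v)$ from \eqref{deflm01}, and the relation $\varphi(w\trl a)=w\cdot f(a)$ from \eqref{deflm02}; the right-handed identity is symmetric. For the Peiffer identity $\widehat{f}(A)\cdot B=A\cdot\widehat{f}(B)$, the purely $\hh$-valued component is exactly the original Peiffer identity \eqref{crossed02} for $(\hh,\pp,f)$, while the mixed $V$-valued component reduces termwise to \eqref{deflm03}, namely $f(a)\cdot v'=a\trr\varphi(v')$ and $\varphi(v)\trl b=v\cdot f(b)$. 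Assembling these three steps then yields that $(\hh\oplus V,\pp\oplus W,\widehat{f})$ is a crossed module over an associative algebra, as claimed.
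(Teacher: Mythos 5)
Your proposal is correct and follows essentially the same route as the paper: a direct componentwise verification that reduces the equivariance of $\widehat{f}$ to \eqref{deflm01}--\eqref{deflm02}, the bimodule axioms for $\hh\oplus V$ over $\pp\oplus W$ to the compatibility conditions \eqref{deflm11}--\eqref{deflm16}, and the Peiffer-type identity to \eqref{deflm03}. The only difference is that you also spell out the associativity of the semidirect product $\pp\oplus W$ (which the paper leaves implicit), a harmless and slightly more complete touch.
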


\begin{proof}
First, we verify that $\widehat{f}$ is an equivariant map. We have the equality
\begin{eqnarray}\label{semi01}
\widehat{f}{\big((x+w)\cdot ({a}+v)\big)}=(x+w)\cdot \widehat{f}({a}+v).
\end{eqnarray}
The left hand side of \eqref{semi01} is
\begin{eqnarray*}
\widehat{f}{\big((x+w)\cdot ({a}+v)\big)}&=&\widehat{f}{\big( x\cdot {a} +x\cdot v+ w\trl {a}\big)}\\
&=&f( x\cdot {a} )+\varphi(x\cdot v)+\varphi (w\trl {a}),
\end{eqnarray*}
and the right hand side of \eqref{semi01} is
\begin{eqnarray*}
{(x+w)\cdot \widehat{f}({a}+v)}&=&(x+w)\cdot  (\dM(a)+\varphi(v))\\
&=&x\cdot\dM(a)+x\cdot \varphi(v)+w\cdot f(a).
\end{eqnarray*}

Si{a}ilarly, we have the equality
\begin{eqnarray}\label{semi02}
\widehat{f}{\big(({a}+v)\cdot (x+w)\big)}=\widehat{f}({a}+v)\cdot (x+w).
\end{eqnarray}
The left hand side of \eqref{semi02} is
\begin{eqnarray*}
\widehat{f}{\big( ({a}+v)\cdot (x+w)\big)}&=&\widehat{f}{\big({a}\cdot x +v\cdot x+ {a}\trr w\big)}\\
&=&f({a}\cdot x)+\varphi(v\cdot x)+\varphi ({a}\trr w),
\end{eqnarray*}
and the right hand side of \eqref{semi02} is
\begin{eqnarray*}
{\widehat{f}({a}+v)\cdot (x+w)}&=&(\dM(a)+\varphi(v))\cdot (x+w)\\
&=&f(a)\cdot x+\varphi(v)\cdot x+f(a)\cdot w.
\end{eqnarray*}
Thus the two sides of  \eqref{semi01} and \eqref{semi02}  are equal to each other by conditions \eqref{deflm01} and \eqref{deflm02}.

Next, we verify that $\hh\oplus V$ carries a representation structure over associative algebra $\pp\oplus W$.
The left module condition is
\begin{eqnarray}\label{sembimod01}
\notag&& (x+w)\cdot\big((x'+w')\cdot({a}+v)\big)\\
&=&\big((x+w)(x'+w')\big)\cdot ({a}+v)+  (x'+w')\cdot\big((x+w)\cdot({a}+v)\big).
\end{eqnarray}
The left hand side of \eqref{sembimod01} is
\begin{eqnarray*}
&& (x+w)\cdot\big((x'+w')\cdot({a}+v)\big)\\
&=&(x+w)\cdot\big(x'\cdot {a}+x'\cdot v+w'\trl {a}\big)\\
&=&x\cdot (x'\cdot {a})+x\cdot (x'\cdot v)+x\cdot (w'\trl {a})+w\trr (x'\cdot w),
\end{eqnarray*}
and the right hand side of  \eqref{sembimod01} is
\begin{eqnarray*}
&&\big( (x+w)\cdot (x'+w')\big)\cdot ({a}+v)\\
&=&\big((x\cdot x')+x\cdot w'+w\cdot x'\big)\cdot({a}+v)\\
&=&(x\cdot x')\cdot {a}+(x\cdot x')\cdot v+(x\cdot w')\trr {a}+(w\cdot x')\trl {a},
\end{eqnarray*}
\begin{eqnarray*}
&& (x'+w')\cdot\big((x+w)\cdot({a}+v)\big)\\
&=&(x'+w')\cdot\big(x\cdot {a}+x\cdot v+w\trl {a}\big)\\
&=&x'\cdot (x\cdot {a})+x'\cdot (x\cdot v)+x'\cdot (w\trl {a})+w'\trr (x\cdot w).
\end{eqnarray*}
Thus the two side of \eqref{sembimod01} is equal to each other if and only if \eqref{deflm11} and \eqref{deflm12} hold.
Similar computations show that $\hh\oplus V$ carries a right module structure over $\pp\oplus W$.
 if and only if \eqref{deflm13} and \eqref{deflm14} hold, and it satisfy the representation conditions if and only if \eqref{deflm11}--\eqref{deflm16} hold.

Finally, we verify that
\begin{eqnarray}\label{bimod333}
 \widehat{f}({a}+v)\cdot ({a}'+v')=({a}+v)\cdot  \widehat{f}({a}'+v').
 \end{eqnarray}
 The left hand side of \eqref{bimod333} is
 \begin{eqnarray*}
 \widehat{f}({a}+v)\cdot ({a}'+v')&=&(f(a)+\varphi(v))\cdot ({a}'+v')\\
 &=&f(a)\cdot {a}'+f(a)\cdot v'+\varphi(v)\trl {a}',
 \end{eqnarray*}
 and the right hand side \eqref{bimod333} of
  \begin{eqnarray*}
({a}+v)\cdot  \widehat{f}({a}'+v')&=&({a}+v)\cdot (f({a}')+\varphi(v'))\\
 &=&{a}\cdot f({a}')+v\cdot f({a}')+{a}\trr\varphi(v').
 \end{eqnarray*}
 Thus   \eqref{bimod333}  holds by condition \eqref{deflm03}.
This complete the proof.
\end{proof}

Next we develop a cohomology theory for a crossed module over associative algebra $(\hh,\pp, f)$ with coefficients in a representation $(V,W,\varphi)$.
The $k$-cochain  $C^{k}((\hh,\pp,f),(V,W,\varphi))$ is defined  to be the space:
\begin{eqnarray}
\Hom(\otimes^{k} \pp,W)\oplus\Hom(\mathcal{A}^{k-1},V)\oplus \Hom(\mathcal{A}^{k-2},W)\oplus \Hom(\hh^{k-1},V).
\end{eqnarray}
where
\begin{eqnarray*}
 &&\Hom(\mathcal{A}^{k-1},V):=\bigoplus_{i=1}^k  \Hom(\underbrace{{\pp}\otimes\cdots\otimes {\pp}}_{i-1}\otimes \hh\otimes \underbrace{{\pp}\otimes\cdots\otimes {\pp}}_{k-i},V),
 \end{eqnarray*}
is the direct sum of all possible tensor powers of ${\pp}$ and $\hh$ in which ${\pp}$ appears $k-1$ times but $\hh$ appears only once  and similarly for
$\Hom(\mathcal{A}^{k-2},W).$

The cochain complex is given by
\begin{equation} \label{eq:complex}
\begin{split}
 & \qquad \qquad\  W\stackrel{D_0}{\longrightarrow}\\
 &  \quad\Hom(\pp,W)\oplus\Hom(\hh,V)\stackrel{D_1}{\longrightarrow}\\
 & \Hom({\pp}\otimes {\pp}, W)\oplus \Hom(\mathcal{A}^{1}V)\oplus \Hom(\hh,W)\stackrel{D_2}{\longrightarrow}\\
  & \Hom(\otimes^3 \pp, W)\oplus \Hom(\mathcal{A}^{2}, V)\oplus \Hom(\mathcal{A}^{1},W)\oplus \Hom(\otimes^2\hh,W)\stackrel{D_3}{\longrightarrow}\cdots,
\end{split}
\end{equation}

We define the following maps:
\begin{eqnarray*}
&&\varphi^\sharp: \Hom(\mathcal{A}^{k-1},V)\to \Hom(\mathcal{A}^{k-1},W),\\
&&h=(h_1, h_2): \Hom(\mathcal{A}^{k},W)\to \Hom(\mathcal{A}^{k},V),\\
&&l=(l_1, l_2): \Hom(\mathcal{A}^{k},W)\to \Hom(\mathcal{A}^{k-1},W)),\\
&&k=(k_1, k_2): \Hom(\mathcal{A}^{k-2},W)\to \Hom(\otimes^{k-1}\fg_1,W)),\\
\end{eqnarray*}
by
\begin{eqnarray*}
&&\varphi^\sharp(\mu)(x_1, \cdots, x_{k-1},{a})=\varphi(\mu(x_1, \cdots, x_{k-1},{a})),\\
&&\varphi^\sharp(\nu)({a},x_1, \cdots, x_{k-1})=\varphi(\nu({a},x_1, \cdots, x_{k-1})),\\
&&h_1(\omega)(x_1, \cdots, x_{k},{a})=\omega( x_1, \cdots, x_{k})\trl {a},\\
&&h_2(\omega)({a},x_1, \cdots, x_{k})={a}\trr \omega( x_1, \cdots, x_{k}),\\
&&l_1(\omega)({a}, x_1, \cdots, x_{k-1})=\omega(f(a),x_1, \cdots, x_{k-1}),\\
&&l_2(\omega)(x_1, \cdots, x_{k-1},{a})=\omega(x_1, \cdots, x_{k-1}, f(a)),\\
&&k_1(\mu)({a}, b_1, \cdots, b_{k-2})=\mu(f(a),b_1, \cdots, b_{k-2}),\\
&&k_2(\nu)(b_1, \cdots, b_{k-2},{a})=\nu(b_1, \cdots, b_{k-2}, f(a)).
\end{eqnarray*}
Thus, one can write the cochain complex in components as follows:
$$
\xymatrix{
  W \ar[d]_{-\delta_1}\ar[dr]^{\varphi\sharp}& & & & \\
  \Hom(\pp,W)\qquad\ar[d]_{-\delta_1}\ar@<.5em>[dr]^ {-h_1}\ar@<1em>[dr]_{-h_2} \ar@<.5em>[drr]^ {\qquad-l_1} \ar@<.1em>[drr]_{\quad\qquad\qquad-l_2} &\hspace{-1cm} \oplus  \Hom(\hh,V)  \ar[d]_{\delta_2}\ar[dr]^{\varphi^\sharp}&&& \\
\Hom({\pp}\otimes {\pp}, W)\qquad\ar[d]_{-\delta_1}\ar@<.5em>[dr]^ {-h_1}\ar@<1em>[dr]_{-h_2}
\ar@<.5em>[drr]^ {\qquad-l_1} \ar@<.1em>[drr]_{\quad\qquad\qquad-l_2} & \oplus \Hom(\mathcal{A}^1, V)
            \ar[d]_{\delta_2}\ar[dr]^{\varphi^\sharp} \ar@<.5em>[drr]^ {\qquad-k_1} \ar@<.1em>[drr]_{\quad\qquad\qquad-k_2}   & \hspace{-.5cm}\oplus\Hom(\hh,W)\ar[d]_{-\delta_3}\ar[dr]^{\varphi^\sharp}  &&\\
\Hom(\otimes^3 \pp, W) &\oplus \Hom(\mathcal{A}^2, V)         & \hspace{-.5cm} \oplus \Hom(\mathcal{A}^1,W)
   & \oplus\Hom(\otimes^2\hh,V)& &  \\
           }
$$
Therefore we get a cochain complex $C^k((\hh,\pp,f),(V,W,\varphi))$ whose cohomology group
$$H^k((\hh,\pp,f),(V,W,\varphi))=Z^k((\hh,\pp,f),(V,W,\varphi))/B^k((\hh,\pp,f),(V,W,\varphi))$$
is defined as the cohomology group of $(\hh,\pp,f)$ with coefficients in $(V,W,\varphi)$.
We list the low dimensional coboundary operator as follows.

For 1-cochain $(N_0,N_1)\in \Hom(\pp,W)\oplus\Hom(\hh,V)$, the coboundary map is
\begin{eqnarray*}
D_1(N_0,N_1)(a)
&=&\varphi\circ N_1(a)-N_0\circ f(a),\\
D_1(N_0,N_1)(x,y)
&=&N_0(x)\cdot y +  x\cdot N_0(y) - N_0(x\cdot y),\\
D_1(N_0,N_1)(x,{a})
&=&N_0(x)\cdot {a}+ x\cdot N_1(a) - N_1(x\cdot {a}),\\
D_1(N_0,N_1)({a},x)
&=&N_1(a)\cdot x+ {a}\cdot N_0(x) - N_1({a}\cdot x).
\end{eqnarray*}

For a 2-cochain
\begin{eqnarray*}
(\psi,\omega,\mu,\nu)&\in &\Hom({\pp}\otimes {\pp}, W)\oplus \Hom(\mathcal{A}^1,V)\oplus \Hom(\hh,W),
\end{eqnarray*}
 we get
\begin{eqnarray*}
D_2(\psi,\omega,\mu,\nu)&\in &\Hom(\otimes^3 \pp, W)\oplus \Hom(\mathcal{A}^2,V)\oplus \Hom(\mathcal{A}^1,W)\oplus \Hom(\otimes^2\hh,W),
\end{eqnarray*}
where
\begin{eqnarray*}
 &&\Hom(\mathcal{A}^2,V):=\Hom({\pp}\otimes {\pp}\otimes \hh,V)\oplus \Hom({\pp}\otimes \hh\otimes {\pp},V)\oplus \Hom(\hh\otimes {\pp}\otimes {\pp},V),\\
&& \Hom(\mathcal{A}^1,W):= \Hom({\pp}\otimes \hh,W)\oplus \Hom(\hh\otimes {\pp},W).
\end{eqnarray*}
Then a 2-cocycle is  a 2-cochain $(\psi,\omega,\mu,\nu)\in \Hom({\pp}\otimes {\pp}, W)\oplus \Hom(\pp,\Hom(\hh,V))\oplus \Hom(\hh,W)$ such that the following conditions hold:
\begin{eqnarray}
&&\psi ( x\cdot {a} )+\varphi (\mu (x,{a}))-x\cdot\psi (a)-\omega (x, f(a))=0,\\
&&\psi ( {a}\cdot x )+\varphi(\nu ({a},x))-\omega (f(a),x)-\psi (a)\cdot x=0,\\
&&\psi (a)\trl n+\mu({f} (a),n)-{a}\trr \psi (b)-\nu({a},{f} (b))=0,\\[1em]
\notag&&+\omega (x, y\cdot z)-\omega(x\cdot y,  z)-\omega ( y, x\cdot z)\\
&&x\cdot \omega (y, z)-\omega (x, y)\cdot  z- y\cdot \omega (x, z)=0,\\[1em]
\notag&&x\cdot\mu(y,{a})+\mu(x,y\cdot {a})-\omega(x,y)\trr {a}\\
&&-\mu(x\cdot y,{a})- y\cdot\mu(x,{a})-\mu( y, x\cdot {a})=0,\\[1em]
\notag&&{a}\trl\omega(x,y)+\nu({a}, (x\cdot y))-\nu({a},x)\cdot  y\\
&&-\nu({a}\cdot x, y)-x\cdot \nu({a},y)-\mu(x, {a}\cdot y)=0,\\[1em]
\notag&&x\cdot \nu({a},y))+\mu(x, {a}\cdot y)-\nu(x\cdot {a},  y)-\mu(x,{a})\cdot y\\
&&-{a}\trr\omega(x,y))-\nu( {a}, (x\cdot y))=0.
\end{eqnarray}
The reader can compare these conditions with conditions \eqref{eq:coc01}--\eqref{eq:coc07} in Section 2.

\subsection{Infinitesimal deformations}

Let $(\hh,\pp, f)$ be a crossed module over associative algebra  and $\psi: \hh \to\pp,~\omega: {\pp}\otimes {\pp}\to \pp$, $\mu:\pp \otimes \hh \to \hh, ~\nu: \hh\otimes \pp\to \hh$ be linear maps. Consider a $\lambda$-parametrized family of linear operations:
\begin{eqnarray*}
 {f}_\lam (a)&\triangleq&{f} (a)+\lambda\psi (a),\\
 {x\cdot_\lam y}&\triangleq& x\cdot y+ \lambda\omega (x, y),\\
 {x\cdot_\lam a}&\triangleq&  x\cdot {a} + \lambda\mu(x,{a}),\\
  {{a}\cdot_\lam x}&\triangleq&  {a}\cdot x + \lambda\nu({a}, x).
 \end{eqnarray*}

If  $(\hh_\lambda, \pp\dlam, f\dlam)$ forms a crossed module over associative algebra, then we say that
$(\psi,\omega,\mu,\nu)$ generates a 1-parameter infinitesimal deformation of $(\hh,\pp, f)$.

\begin{thm}\label{thm:deformation} With the notations above,
$(\psi,\omega,\mu,\nu)$ generates a $1$-parameter infinitesimal deformation of $(\hh,\pp, f)$  if and only if the following conditions hold:
\begin{itemize}
  \item[\rm(i)] $(\psi,\omega,\mu,\nu)$ is a $2$-cocycle of $(\hh,\pp,f)$ with coefficients in the adjoint representation;

  \item[\rm(ii)] $(\hh,\pp,\psi)$ is a crossed module over associative algebra with multiplication $\omega$ on $\pp$ and representation structures of $\pp$ on $\hh$ by $\mu, \nu$.
\end{itemize}
\end{thm}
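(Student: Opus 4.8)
The plan is to mirror the proof of the analogous deformation theorem for associative $2$-algebras given earlier in the paper: substitute the $\lambda$-parametrized operations $(f_\lambda,\cdot_\lambda)$ into each defining axiom of a crossed module over an associative algebra, expand everything as a polynomial in $\lambda$, and read off the vanishing of the coefficients of $\lambda^0$, $\lambda^1$ and $\lambda^2$ separately. The axioms to be checked are: associativity of $\cdot_\lambda$ on $\pp$; the three bimodule axioms making $\hh$ a representation of $(\pp,\cdot_\lambda)$; the two equivariance identities $f_\lambda(x\cdot_\lambda a)=x\cdot_\lambda f_\lambda(a)$ and $f_\lambda(a\cdot_\lambda x)=f_\lambda(a)\cdot_\lambda x$; and the Peiffer identity $f_\lambda(a)\cdot_\lambda b=a\cdot_\lambda f_\lambda(b)$. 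Since $(f,\cdot)$ already satisfies all of these, every $\lambda^0$-coefficient vanishes identically, so only the $\lambda^1$ and $\lambda^2$ terms carry information.

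First I would collect the $\lambda^1$-coefficients. For example, associativity yields $\omega(x\cdot y,z)+\omega(x,y)\cdot z=\omega(x,y\cdot z)+x\cdot\omega(y,z)$, and the first equivariance identity yields $\psi(x\cdot a)+f(\mu(x,a))=\omega(x,f(a))+x\cdot\psi(a)$, while the Peiffer identity yields $\psi(a)\cdot b+\mu(f(a),b)=a\cdot\psi(b)+\nu(a,f(b))$. Carrying this out across all seven axioms produces exactly the equations defining a $2$-cocycle $(\psi,\omega,\mu,\nu)$ with coefficients in the adjoint representation, i.e.\ the case $V=\hh$, $W=\pp$, $\varphi=f$ of the listed cocycle conditions. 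This identifies the first-order obstruction with item (i).

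Next I would collect the $\lambda^2$-coefficients. Associativity gives $\omega(\omega(x,y),z)=\omega(x,\omega(y,z))$, so $\omega$ is an associative multiplication on $\pp$; the three bimodule axioms give that $\mu,\nu$ define a representation of $(\pp,\omega)$ on $\hh$; the two equivariance terms give $\psi(\mu(x,a))=\omega(x,\psi(a))$ and $\psi(\nu(a,x))=\omega(\psi(a),x)$, so that $\psi$ is equivariant for the new structure; and the Peiffer term gives $\mu(\psi(a),b)=\nu(a,\psi(b))$. Together these are precisely the assertion that $(\hh,\pp,\psi)$ is a crossed module with multiplication $\omega$ on $\pp$ and representation $\mu,\nu$, which is item (ii). The reverse implication is then immediate: assuming (i) and (ii), the $\lambda^1$ and $\lambda^2$ coefficients vanish by construction while the $\lambda^0$ coefficients vanish because $(\hh,\pp,f)$ is a crossed module, so $(f_\lambda,\cdot_\lambda)$ satisfies every axiom and is a genuine one-parameter deformation.

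The main obstacle is organizational rather than conceptual: one must verify that the seven first-order coefficient equations coincide term-by-term with the seven cocycle equations in the adjoint case, correctly matching each action symbol $\trl$, $\trr$, $\cdot$ to its place, and that the second-order equations reproduce each crossed-module axiom with no sign or ordering discrepancy. I expect the bookkeeping distinguishing $\mu$ from $\nu$ in the mixed bimodule axioms, together with the matching of the Peiffer coefficient $\mu(\psi(a),b)=\nu(a,\psi(b))$, to demand the most care.
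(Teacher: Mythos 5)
Your proposal is correct and follows essentially the same route as the paper's own proof: substitute the $\lambda$-parametrized operations into each crossed-module axiom (equivariance, associativity, the bimodule axioms, and the Peiffer identity $f_\lambda(a)\cdot_\lambda b=a\cdot_\lambda f_\lambda(b)$), note that the $\lambda^0$ terms vanish automatically, and identify the $\lambda^1$ coefficients with the $2$-cocycle conditions and the $\lambda^2$ coefficients with the crossed-module structure $(\hh,\pp,\psi)$. The sample first-order identities you write out (e.g.\ $\psi(x\cdot a)+f(\mu(x,a))=\omega(x,f(a))+x\cdot\psi(a)$ and $\psi(a)\cdot b+\mu(f(a),b)=a\cdot\psi(b)+\nu(a,f(b))$) match the paper's equations exactly.
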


\begin{proof}
If $(\hh_\lambda, \pp\dlam, f\dlam)$  is a crossed module over associative algebra, then $f\dlam$ is an equivariant map.
Thus we have
\begin{eqnarray*}
&&{f}\dlam (x\cdot_\lam a)-x\cdot_\lam {f}\dlam (a)\\
&=&({f}+\lambda\psi )( x\cdot {a} + \lambda\mu(x,{a}))-x({f} (a)+\lambda\psi (a))-\lambda\omega (x, {f} (a)+\lambda\psi (a))\\
&=&{f}(x\cdot {a})+\lambda\Big(\psi( x\cdot {a} )+{f}\mu(x,{a})\Big)+\lambda^2\psi \omega (x,{a})\\
&&-x f(a)-\lambda\Big(\omega(x, f(a)+x\psi (a)\Big)-\lambda^2\mu(x,\psi (a))\\
&=&0,
\end{eqnarray*}
which implies that
\begin{eqnarray}
\label{eq:2-coc01}\psi ( x\cdot {a} )-x\psi (a)+{f} \mu (x,{a})-\omega (x, {f} (a))&=&0,\\
\label{eq:2-coc02}\psi \mu(x,{a})-\omega (x,\psi (a))&=&0.
\end{eqnarray}
Similar computation shows that
\begin{eqnarray}
\label{eq:2-coc03}\psi ({a}\cdot x)-\psi (a)x+{f} \nu ({a},x)-\omega ({f} (a), x)&=&0,\\
\label{eq:2-coc04}\psi \nu({a},x)-\omega (\psi (a),x)&=&0.
\end{eqnarray}

Since $\pp\dlam$ is an associative algebra, we have
\begin{eqnarray*}
&&x\cdot_\lam (y\cdot_\lam z)-x\cdot_\lam  (y\cdot_\lam  z)\\
&=&x\cdot (y\cdot z)]-(x\cdot y)\cdot z\\
&&+\lambda\Big(\omega (x, y\cdot z)+[x,\omega (y,z)]-\omega (x\cdot y, z)-\omega (x, y)\cdot z\\
&&+\lambda^2\Big(\omega(x,\omega (y,z))-\omega(\omega (x,y), z)\Big)\\
&=&0,
\end{eqnarray*}
which implies that
\begin{eqnarray}
\label{eq:2-cocycle1'} \omega (x, y\cdot z)+x\cdot\omega (y,z)-\omega (x\cdot y, z)-\omega (x, y)\cdot z&=&0,\\
\label{eq:2-cocycle1''}\omega(x,\omega (y,z))-\omega(\omega (x,y), z)-\omega( y,\omega (x,z))&=&0.
\end{eqnarray}

Since $\hh_\lambda$ is a left module of $\pp\dlam$, we have
\begin{eqnarray*}
&&x\cdot_\lam(y\cdot_\lam {a})-(x\cdot_\lam y)\cdot_\lam {a}\\
&=&x\cdot (y\cdot {a})-(x\cdot y)\cdot {a} \\
\nonumber&&+\lambda\Big(x\mu(y,{a})+\mu(x, y\cdot {a})- \mu(x\cdot y,{a})-\omega(x,y)\cdot {a}\Big)\\
&&+\lambda^2\Big(\mu(x, \mu(y,{a}))-\mu(\omega (x,y),{a})\Big)\\
&=&0,
\end{eqnarray*}
which implies that
\begin{eqnarray}
\label{eq:2-coc31} x\cdot \mu(y,{a})+\mu(x, y\cdot {a})- \mu(x\cdot y,{a})-\omega(x,y)\cdot {a}&=&0,\\
\label{eq:2-coc32} \mu(x, \mu(y,{a}))-\mu(\omega (x,y),{a})-\mu( y, \mu(x,{a}))&=&0.
\end{eqnarray}
Similar computation shows that
\begin{eqnarray}
\label{eq:2-coc33}\nu({a}, (x\cdot y))+{a}\cdot\omega(x,y)-\nu({a}, x)\cdot y-\nu({a}\cdot x,  y)&=&0,\\
\label{eq:2-coc34}\nu({a},\omega (x,y))-\nu(\nu(x,{a}),  y)-\mu(x,\nu ({a},y))&=&0.
\end{eqnarray}

At last, since
\begin{eqnarray*}
&&{f}\dlam (a)\cdot\dlam n-{a}\cdot\dlam  {f}\dlam (b)\\
&=&({f} (a)+\lambda\psi (a))\cdot {b}+\lambda\mu ({f} (a)+\lambda\psi (a),{b})\\
&&-{a}\cdot ({f} (b)+\lambda\psi (b))-\lambda\nu ({a}, {f} (b)+\lambda\psi (b))\\
&=&{f} (a)\cdot {b}+\lambda(\psi (a)\cdot {b}+\mu({f} (a),{b}))+\lambda^2\nu(\psi (a),{b})\\
&&-{a}\cdot {f} (b)-\lambda(\nu({a}, {f} (b))+{a}\cdot \psi (b))-\lambda^2\nu({a},\psi (b))\\
&=&0,
\end{eqnarray*}
which implies that
\begin{eqnarray}
\label{eq:2-coc41}\psi (a)\trl n+\nu({f} (a),n)-{a}\trr \psi (b)-\nu({a},{f} (b))&=&0,\\
\label{eq:2-coc42}\mu(\psi (a),n)-\nu({a},\psi (b))&=&0.
\end{eqnarray}

By \eqref{eq:2-coc01},  \eqref{eq:2-coc31}, \eqref{eq:2-coc33} and \eqref{eq:2-coc41}, we find that $(\psi,\omega,\mu,\nu)$ is a 2-cocycle of $(\hh,\pp,f)$ with the coefficients in the adjoint representation.
Furthermore, by \eqref{eq:2-coc02}, \eqref{eq:2-coc32}, \eqref{eq:2-coc34} and \eqref{eq:2-coc42}, $(\hh,\pp, \psi)$ is a crossed module over associative algebra.
\end{proof}

Now we introduce the notion of Nijenhuis operators which gives trivial deformations.

Let $(\hh,\pp, f)$ be a crossed module over associative algebra  and $N=(N_0,N_1)$ be a pair of linear maps $N_0:\pp \to \pp $ and $N_1: \hh\to  \hh $. Define an exact 2-cochain
 $$(\psi,\omega,\mu,\nu)=D(N_0,N_1)$$
 by differential $D$ discussed above, i.e.
  \begin{eqnarray*}
   \psi(a)&=&{f} \circ N_1(a)-N_0\circ {f}(a),\\
\omega(x, y)=(x\cdot_N y)&=&N_0(x)y +  x\cdot N_0(y) - N_0(x\cdot y),\\
\mu(x,{a})=x\cdot_N {a}&=&N_0(x) {a} + x\cdot N_1(a) - N_1(x\cdot {a}),\\
\nu({a},x)={a}\cdot_N x&=&N_1(a) x + {a}\cdot N_0(x) - N_1({a}\cdot x).
 \end{eqnarray*}

 \begin{defi}\label{defi:Nijenhuis}
  A pair of linear maps $N=(N_0,N_1)$ is called a Nijenhuis operator if for all $x,y\in\pp $ and $a\in \hh $, the following conditions are satisfied:
   \begin{itemize}
     \item[(i)] $f  N_1(a)=N_0 f(a),$
     \item[(ii)] $N_0(x\cdot_N y)=N_0(x)\cdot N_0(y),$
     \item[(iii)] $N_1(x\cdot_N {a})=N_0(x)\cdot N_1(a),$
     \item[(iv)] $N_1({a}\cdot_N x)=N_1(a)\cdot N_0(x).$
   \end{itemize}
 \end{defi}

\begin{defi}
A deformation is said to be trivial if there exists a pair of  linear maps $N_0:\pp \to \pp, ~N_1:  \hh \to  \hh $,
such that $(T_0,T_1)$ is a homomorphism from $(\hh\dlam,\pp\dlam,{f}\dlam)$ to $(\hh,\pp,{f}) $, where $T_0 = \id + \lambda N_0$,
$T_1 = \id + \lambda N_1$.
\end{defi}

\begin{thm}\label{thm:Nijenhuis}
Let $N=(N_0,N_1)$ be a Nijenhuis operator. Then a deformation
can be obtained by putting
\begin{equation}\label{Nijenhuis}
\left\{\begin{array}{rll}
\psi (a) &=&({f} N_1-N_0{f}) (a),\\
\omega (x, y)&=&N_0(x)\cdot y  +  x\cdot N_0(y) - N_0(x\cdot y),\\
\mu(x,{a})&=&N_0(x)\cdot {a} + x\cdot N_1(a) - N_1(x\cdot {a})\\
\nu({a},x)&=&{a}\cdot N_0(x) +  N_1(a)\cdot x - N_1({a}\cdot x).
\end{array}\right.
\end{equation}
Furthermore, this deformation is trivial.
\end{thm}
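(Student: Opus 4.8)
The plan is to derive both assertions from the deformation theorem (Theorem~\ref{thm:deformation}), which reduces the statement ``$(\psi,\omega,\mu,\nu)$ generates a $1$-parameter infinitesimal deformation of $(\hh,\pp,f)$'' to two conditions: that $(\psi,\omega,\mu,\nu)$ be a $2$-cocycle for the adjoint representation, and that $(\hh,\pp,\psi)$ be itself a crossed module with multiplication $\omega$ on $\pp$ and actions $\mu,\nu$. I would verify these two conditions for the specific tuple manufactured from $N=(N_0,N_1)$, and then establish triviality by exhibiting the homomorphism $(T_0,T_1)$.

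The cocycle condition is immediate and requires no computation. By construction the tuple equals $D(N_0,N_1)$, so it is a $2$-coboundary; since $D\circ D=0$ (equivalently, every $2$-coboundary is a $2$-cocycle), it lies in $Z^2$ automatically, which is condition (i).

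For the second condition I would invoke the four clauses of Definition~\ref{defi:Nijenhuis}. Clause (i), $fN_1(a)=N_0f(a)$, says exactly that $\psi(a)=fN_1(a)-N_0f(a)=0$, so $f$ is unchanged by the deformation and every equivariance- and Peiffer-type identity that involves $\psi$ collapses to $0=0$. The genuine content is the associativity of the Nijenhuis product $x\cdot_N y=N_0(x)\cdot y+x\cdot N_0(y)-N_0(x\cdot y)$ (which is $\omega$) together with the left and right module axioms for $\mu$ and $\nu$. These are the classical Nijenhuis identities: expanding $(x\cdot_N y)\cdot_N z-x\cdot_N(y\cdot_N z)$, substituting $N_0(x\cdot_N y)=N_0(x)\cdot N_0(y)$ from clause (ii), and using associativity of the original product makes all terms cancel in pairs; the module axioms follow by the identical bookkeeping with clauses (iii) and (iv) in place of (ii). I expect this term-by-term cancellation to be the main obstacle---conceptually routine, but the most laborious part of the argument.

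Finally, for triviality I would check that $(T_0,T_1)=(\id+\lambda N_0,\id+\lambda N_1)$ is a strict homomorphism from the deformed crossed module $(\hh\dlam,\pp\dlam,f\dlam)$ to $(\hh,\pp,f)$. Treating the chain identity $f\circ T_1=T_0\circ f\dlam$, the multiplicativity $T_0(x\cdot\dlam y)=T_0(x)\cdot T_0(y)$, and the two module compatibilities $T_1(x\cdot\dlam a)=T_0(x)\cdot T_1(a)$ and $T_1(a\cdot\dlam x)=T_1(a)\cdot T_0(x)$ as polynomial identities in $\lambda$, the constant terms are trivial, the coefficients of $\lambda$ reproduce precisely the defining formulas for $\psi,\omega,\mu,\nu$, and the coefficients of $\lambda^2$ are exactly clauses (i)--(iv) of Definition~\ref{defi:Nijenhuis} (the clause (i) contribution $N_0\psi=0$ being automatic since $\psi=0$). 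Hence all homomorphism axioms hold, and the deformation is trivial.
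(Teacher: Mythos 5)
Your proposal is correct and follows essentially the same route as the paper: observe that $(\psi,\omega,\mu,\nu)=D(N_0,N_1)$ is a coboundary and hence a cocycle, check that the Nijenhuis conditions force the deformed structures to form a crossed module, and conclude via Theorem~\ref{thm:deformation}. You in fact go slightly further than the paper's proof, which does not explicitly verify the triviality claim; your identification of the $\lambda^2$-coefficients of the homomorphism equations for $(T_0,T_1)=(\id+\lambda N_0,\id+\lambda N_1)$ with clauses (i)--(iv) of Definition~\ref{defi:Nijenhuis} is exactly the missing piece and is consistent with the computation the paper carries out in Section~3 for associative $2$-algebras.
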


\begin{proof} Since $(\psi,\omega,\mu,\nu)=D(N_0,N_1)$, it is obvious that $(\psi,\omega,\mu,\nu)$ is a 2-cocycle.
It is easy to check that $(\psi,\omega,\mu,\nu)$ defines a crossed module over associative algebra $(\hh,\pp,\psi)$  .
Thus by Theorem \ref{thm:deformation}, $(\psi,\omega,\mu,\nu)$ generates a deformation.
\end{proof}

\subsection{Abelian extensions}

In this section, we study abelian extensions of crossed modules over associative algebras.

\begin{defi}
Let $(\hh,\pp, f)$ be a crossed module over associative algebra. An extension of $(\hh,\pp, f)$ is a
short exact sequence such that $\mathrm{Im}(i_0)=\mathrm{Ker}(p_0)$ and $\mathrm{Im}(i_1)=\mathrm{Ker}(p_1)$ in the following commutative diagram
\begin{equation}\label{eq:ext2}
\CD
  0 @>0>>  \frkh @>i_1>> \widehat{\hh} @>p_1>>  \hh  @>0>> 0 \\
  @V 0 VV @V \varphi VV @V \widehat{f} VV @V{f} VV @V0VV  \\
  0 @>0>> W @>i_0>> \widehat{\pp} @>p_0>> \pp @>0>>0
\endCD
\end{equation}
where $(V,W,\varphi)$ is a crossed module over associative algebra.
\end{defi}

We call $(\widehat{\hh},\widehat{\pp}, \widehat f)$ an extension of $(\hh,\pp, f)$ by
$(V,W,\varphi)$, and denote it by $\widehat{\E}$.
It is called an abelian extension if $(V,W,\varphi)$ is an abelian crossed module over associative algebra  (this means that the multiplication on $W$ is zero, the representation of $W$ on $V$ is trivial).

A splitting $\sigma=(\sigma_0,\sigma_1):(\hh,\pp, f)\to (\widehat{\hh},\widehat \pp, \widehat f)$ consists of linear maps
$\sigma_0:{\pp}\to\widehat{\pp}$ and $\sigma_1:\hh\to \widehat{\hh}$
 such that  $p_0\circ\sigma_0=\id_{{\pp}}$, $p_1\circ\sigma_1=\id_{\hh}$ and $\widehat{f}\circ\sigma_1=\sigma_0\circ f$.

 Two extensions of Hom-associative algebras
 $\widehat{\E}:0\longrightarrow(V,W,\varphi)\stackrel{i}{\longrightarrow}(\widehat{\hh},\widehat \pp, \widehat f)\stackrel{p}{\longrightarrow}(\hh,\pp, f)\longrightarrow0$
 and  $\widetilde{\E}:0\longrightarrow(V,W,\varphi)\stackrel{j}{\longrightarrow}(\widetilde{\hh},\widetilde{\pp}, \widetilde{f})\stackrel{q}{\longrightarrow}(\hh,\pp, f)\longrightarrow0$ are equivalent,
 if there exists a homomorphism $F:(\widehat{\hh},\widehat \pp, \widehat f)\longrightarrow(\widetilde{\hh},\widetilde{\pp}, \widetilde{f})$  such that $F\circ i=j$, $q\circ
 F=p$.

Let $(\widehat{\hh},\widehat\pp, \widehat f)$ be an extension of $(\hh,\pp, f)$ by
$(V,W,\varphi)$ and $\sigma:(\hh,\pp, f)\to (\widehat{\hh},\widehat \pp, \widehat f)$ be a splitting.
Define the following maps:
\begin{equation}\label{extension:bimod}
\left\{\begin{array}{rlclcrcl}
\cdot:&{\pp}\otimes V&\longrightarrow& V,&& x\cdot v&\triangleq&\sigma_0(x)\cdot v,\\
\cdot:&{\pp}\otimes W&\longrightarrow& W,&& x\cdot w&\triangleq&\sigma_0(x)\cdot w,\\
\cdot:&V\otimes{\pp}&\longrightarrow& V,&& v\cdot x&\triangleq&v\cdot\sigma_0(x),\\
\cdot:&W\otimes {\pp} &\longrightarrow& W,&& w\cdot x&\triangleq&w\cdot\sigma_0(x),\\
\trr:&W\otimes \hh&\longrightarrow&V,&&w\trl {a}&\triangleq&w\cdot\sigma_1(a),\\
\trl:&\hh\otimes W&\longrightarrow&V,&&{a}\trr w&\triangleq&\sigma_1(a)\cdot w,
\end{array}\right.
\end{equation}
for all $x,y,z\in\pp$, $m\in \hh $.

\begin{pro}\label{pro:2-modules}
With the above notations,  $(V,W,\varphi)$ is a representation of $(\hh,\pp,f)$.
Furthermore, this representation structure does not depend on the choice of the splitting $\sigma$.
Moreover,  equivalent abelian extensions give the same  representation on $(V,W,\varphi)$.
\end{pro}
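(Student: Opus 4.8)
The plan is to follow the same strategy used for the associative $2$-algebra case in Section~4: transport every structure map through the splitting $\sigma=(\sigma_0,\sigma_1)$ into the ambient crossed module $(\widehat{\hh},\widehat{\pp},\widehat f)$, check each defining axiom there using only associativity, the $\widehat{\pp}$-bimodule structure on $\widehat{\hh}$, the equivariance relations \eqref{crossed01}--\eqref{crossed02} for $\widehat f$, and the identity $\widehat f\circ\sigma_1=\sigma_0\circ f$, and then discard all correction terms by appealing to the hypothesis that $(V,W,\varphi)$ is abelian. First I would verify well-definedness. Since $p=(p_0,p_1)$ is a strict homomorphism, $V=\Ker p_1$ and $W=\Ker p_0$ are preserved by the actions: for $x\in\pp$, $v\in V$ one has $p_1(\sigma_0(x)\cdot v)=p_0(\sigma_0(x))\cdot p_1(v)=x\cdot 0=0$, so $x\cdot v\triangleq\sigma_0(x)\cdot v$ lies in $V$, and likewise $w\trl a=w\cdot\sigma_1(a)\in V$, $a\trr w=\sigma_1(a)\cdot w\in V$, while the $\pp$-actions on $W$ land in $W$.

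Next I would dispatch the axioms (i)--(iv) of the representation in turn. For (i), that $V$ and $W$ are $\pp$-bimodules, a typical identity such as $(x\cdot y)\cdot v=x\cdot(y\cdot v)$ becomes $\sigma_0(x\cdot y)\cdot v$ versus $(\sigma_0(x)\sigma_0(y))\cdot v$; their difference is $\big(\sigma_0(x\cdot y)-\sigma_0(x)\sigma_0(y)\big)\cdot v$, and since $\sigma_0(x\cdot y)-\sigma_0(x)\sigma_0(y)\in W=\Ker p_0$ while $W$ acts trivially on $V$ by abelian-ness, this vanishes. For (ii) I use $\varphi=\widehat f|_V$, so $\varphi(x\cdot v)=\widehat f(\sigma_0(x)\cdot v)=\sigma_0(x)\cdot\widehat f(v)=x\cdot\varphi(v)$ by equivariance of $\widehat f$, and symmetrically on the right, giving \eqref{deflm01}. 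For (iii), the relations \eqref{deflm02} follow from $\widehat f\circ\sigma_1=\sigma_0\circ f$, e.g. $\varphi(w\trl a)=\widehat f(w\cdot\sigma_1(a))=w\cdot\widehat f(\sigma_1(a))=w\cdot\sigma_0(f(a))=w\cdot f(a)$, and \eqref{deflm03} follows from the crossed-module identity \eqref{crossed02} inside $\widehat{\hh}$ together with $\widehat f(\sigma_1(a))=\sigma_0(f(a))$. For (iv), each of \eqref{deflm11}--\eqref{deflm16} is produced by the same move: expand both sides via $\sigma$, apply associativity of the $\widehat{\pp}$-action on $\widehat{\hh}$, and then cancel the resulting multiplier-defect terms, which lie in $W$ (or have a $V$-valued factor) and hence are annihilated by abelian-ness.

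For the independence and equivalence statements I would argue exactly as in the abelian case of Section~4. A second splitting $\sigma'$ satisfies $\sigma_0-\sigma_0'\in\Ker p_0=W$ and $\sigma_1-\sigma_1'\in\Ker p_1=V$, so the actions are unchanged because $W$ (resp.\ $V$) multiplies trivially against arguments from $V,W$. If $F:\widehat{\E}\to\widetilde{\E}$ is an equivalence with $q\circ F=p$ and $\sigma'$ is a splitting of $q$, then $F_0\sigma_0(x)$ and $\sigma_0'(x)$ differ by an element of $\Ker q=W$, whence $\sigma_0'(x)\cdot v=F_0\sigma_0(x)\cdot v=F_0(\sigma_0(x)\cdot v)=\sigma_0(x)\cdot v$, so equivalent extensions yield the same representation; the remaining actions are handled the same way.

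I expect the main obstacle to be purely organizational: carrying out the six mixed compatibility identities \eqref{deflm11}--\eqref{deflm16} while keeping the left and right actions straight (the symbols $\trr$ and $\trl$ are assigned with opposite conventions in \eqref{extension:bimod}), and correctly identifying in each case which defect term, $\sigma_0(x\cdot y)-\sigma_0(x)\sigma_0(y)$ or $\sigma_1(x\cdot a)-\sigma_0(x)\cdot\sigma_1(a)$, is killed by abelian-ness. There is no genuine cohomological subtlety here, since the crossed module is strict (no $l_3$ term), so unlike the associative $2$-algebra computation of Section~4 no Stasheff-type identity intervenes and the bookkeeping is the only difficulty.
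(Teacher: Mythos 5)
Your proposal is correct and follows essentially the same route as the paper's proof: well-definedness via the strict homomorphism $p$, equivariance of $\varphi$ from $\widehat f\circ\sigma_1=\sigma_0\circ f$, the compatibility identities by expanding through $\sigma$ and killing the defect terms (which lie in $W$ or $V$) using abelian-ness, and independence of splitting and of the equivalence class by the same kernel argument. If anything, you are more explicit than the paper, which leaves the verification of \eqref{deflm11}--\eqref{deflm16} to the reader with a one-line remark.
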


\begin{proof}  Firstly, we show that the representation is well-defined.
Since $\Ker p_{0} \cong W$, then for $w\in {W}$, we have $p_{0}(w)=0$.
By the fact that $(p_1,p_0)$ is a homomorphism between $(\widehat{\hh},\widehat\pp, \widehat f)$ and $(\hh,\pp,f)$, we get
$$p_{1}(x\cdot w)=p_{1}(\sigma_0(x) w)=p_{0}\sigma_0(x)\cdot p_{0}(w)=p_{0}\sigma_0(x)\cdot 0=0.$$
Thus $x\cdot w\in  \ker p_{0} \cong {W}$.
Similar computations show that
$$p_{0}(x\cdot v)=p_{0}(\sigma_0(x)\cdot v)=p_{0}\sigma_0(x)\cdot p_{1}(v)=p_{0}\sigma_0(x)\cdot 0=0,$$
$$p_{0}({a}\trr w)=p_{0}(\sigma_1(a)\cdot w)=p_{0}\sigma_1(a)\cdot p_{0}(w)=p_{0}\sigma_0(a)\cdot 0=0.$$
Thus $x\cdot v, {a}\trr w\in \Ker p_0=V$.

Now we  will show that these maps are independent of the choice of $\sigma$. In fact, if we choose another splitting
$\sigma':\pp\to\widehat{\pp}$, then $p_0(\sigma_0(x)-\sigma'_0(x))=x-x=0$,
$p_1(\sigma_1(a)-\sigma'_1(a))={a}-{a}=0$, i.e. $\sigma_0(x)-\sigma'_0(x)\in
\Ker p_0=W$, $\sigma_1(a)-\sigma'_1(a)\in \Ker p_1=V$. Thus,
$(\sigma_0(x)-\sigma'_0(x)(v+w)=0$, $(\sigma_1(a)-\sigma'_1(a))w=0$, which implies that the maps in \eqref{extension:bimod}  are independent
on the choice of $\sigma$. Therefore the representation structures are well-defined.


Now $\varphi$ is an equivariant map since
\begin{eqnarray}
\varphi(x\cdot v)=\varphi(\sigma_0(x)\cdot v)=\sigma_0(x)\cdot\varphi(v)=x\cdot\varphi(v).
\end{eqnarray}
For $\trr: W\otimes \hh\to V$, we have
\begin{eqnarray}
\varphi( w\trl {a})&=&\varphi(w\cdot\sigma_1(a))=w\cdot\widehat{f}\sigma_1(a)\notag\\
&=&w\cdot\sigma_0(f(a))=w\cdot f(a).
\end{eqnarray}
For $\trl: \hh\otimes W\to V$, we have
\begin{eqnarray}
\varphi( {a}\trr w)&=&\varphi(\sigma_1(a)\cdot w)=\widehat{f}\sigma_1(a)\cdot w\notag\\
&=&\sigma_0(f(a))\cdot w=f(a)\cdot w.
\end{eqnarray}
One check that these two maps  $\trr$ and $\trl$ satisfying  the conditions \eqref{deflm11}--\eqref{deflm16}.
Therefore, $(V,W,\varphi)$ is a representation of $(\hh,\pp,f)$.

Finally, suppose that $\widehat{\E}$ and $\widetilde{\E}$ are equivalent abelian extensions,
and $F:(\widehat{\hh},\widehat \pp, \widehat f)\longrightarrow(\widetilde{\hh},\widetilde{\pp}, \widetilde{f})$ be the homomorphism.
Choose linear sections $\sigma$ and $\sigma'$ of $p$ and $q$. Then we have $q_0F_0\sigma_0(x)=p_0\sigma_0(x)=x=q_0\sigma'_0(x)$,
thus $F_0\sigma_0(x)-\sigma'_0(x)\in \Ker q_0=W$. Therefore, we obtain
$$\sigma'_0(x)\cdot w=F_0\sigma_0(x)\cdot w=F_0(\sigma_0(x)\cdot w)=\sigma_0(x)\cdot w,$$
which implies that equivalent abelian extensions give the same module structures on $W$.
Similarly, we can show that equivalent abelian extensions also give the same $(V,W,\varphi)$.
Therefore, equivalent abelian extensions also give the same representation. The proof is finished.
\end{proof}

Let $\sigma:(\hh,\pp, f)\to (\widehat{\hh},\widehat \pp, \widehat f)$  be a splitting of an abelian extension. Define the following linear maps:
\begin{equation}\label{extension:cocycle}
\left\{
\begin{array}{rlclcrcl}
\psi:& \hh &\longrightarrow& W ,&& \psi(a)&\triangleq&\widehat{f}\sigma_1(a)-\sigma_0(f(a)),\\
\omega:&\pp \otimes\pp&\longrightarrow& W ,&& \omega(x,y)&\triangleq&\sigma_0(x)\cdot\sigma_0(y)-\sigma_0(x\cdot y),\\
\mu:& {\pp}\otimes \hh&\longrightarrow&V,&& \mu(x,{a})&\triangleq&\sigma_0(x)\cdot\sigma_1(a)-\sigma_1(x\cdot {a}),\\
\nu:&\hh\otimes \pp &\longrightarrow&V,&& \nu({a},x)&\triangleq&\sigma_1(a)\cdot\sigma_0(x)-\sigma_1({a}\cdot x).
\end{array}\right.
\end{equation}
for all $x,y,z\in\pp$, $m\in \hh $.

\begin{thm}\label{thm:2-cocylce}
With the above notations, $(\psi,\omega,\mu,\nu)$ is a $2$-cocycle of $(\hh,\pp, f)$ with coefficients in $(V,W,\varphi)$.
\end{thm}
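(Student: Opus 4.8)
The plan is to follow the same template used in the analogous abelian-extension theorem for associative 2-algebras proved in Section 4, except that here $l_3=0$, so the computation is a strict simplification of that one with no Stasheff-type terms to track. The input data are the structure identities that automatically hold in the extension $(\widehat{\hh},\widehat{\pp},\widehat f)$: that $\widehat f$ is an equivariant map, the crossed-module compatibility $\widehat f(m)\cdot m'=m\cdot\widehat f(m')$ from \eqref{crossed02}, the associativity of the product on $\widehat{\pp}$, and the left/right module axioms for $\widehat{\hh}$ over $\widehat{\pp}$. Into each of these I would substitute the splitting $\sigma=(\sigma_0,\sigma_1)$ and expand every product of two $\sigma$-images via the defining formulas \eqref{extension:cocycle}, namely $\sigma_0(x)\cdot\sigma_0(y)=\sigma_0(x\cdot y)+\omega(x,y)$, $\sigma_0(x)\cdot\sigma_1(a)=\sigma_1(x\cdot a)+\mu(x,a)$, $\sigma_1(a)\cdot\sigma_0(x)=\sigma_1(a\cdot x)+\nu(a,x)$, together with $\widehat f\sigma_1(a)=\sigma_0(f(a))+\psi(a)$.

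Concretely, I would produce the seven defining $2$-cocycle equations one structural identity at a time. Applying equivariance $\widehat f(\sigma_0(x)\cdot\sigma_1(a))=\sigma_0(x)\cdot\widehat f\sigma_1(a)$, expanding, and using $f(x\cdot a)=x\cdot f(a)$ from \eqref{crossed01} to cancel the $\sigma$-images yields the first cocycle condition; the symmetric identity $\widehat f(\sigma_1(a)\cdot\sigma_0(x))=\widehat f\sigma_1(a)\cdot\sigma_0(x)$ gives the second. The compatibility $\widehat f\sigma_1(a)\cdot\sigma_1(b)=\sigma_1(a)\cdot\widehat f\sigma_1(b)$ produces the third, the one coupling $\psi$ with $\trr,\trl,\mu,\nu$. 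Associativity $(\sigma_0(x)\cdot\sigma_0(y))\cdot\sigma_0(z)=\sigma_0(x)\cdot(\sigma_0(y)\cdot\sigma_0(z))$ gives the fourth, the $\omega$-associativity. Finally, inserting a single $\hh$-element into the three slots of the module-associativity laws, namely expanding $(\sigma_0 x\cdot\sigma_0 y)\cdot\sigma_1 a$, $(\sigma_0 x\cdot\sigma_1 a)\cdot\sigma_0 y$, and $(\sigma_1 a\cdot\sigma_0 x)\cdot\sigma_0 y$ associatively, yields the three remaining conditions on $\mu$ and $\nu$. In each case the pure $\sigma(\text{product})$ terms cancel against one another, leaving exactly the asserted identity.

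The main obstacle, and the only point requiring real care, is the handling of the mixed products in which a kernel element meets the $\sigma$-image of an $\hh$-element. When $\omega(x,y)\in W$ or $\psi(a)\in W$ multiplies $\sigma_1(a)$ the result lands in $V$ and must be identified with the representation action $\trl$, and dually a product $\sigma_1(a)\cdot w$ must be read as $a\trr w$, using the identifications $w\trl a=w\cdot\sigma_1(a)$ and $a\trr w=\sigma_1(a)\cdot w$ established in Proposition \ref{pro:2-modules} via \eqref{extension:bimod}. Simultaneously, any product of two kernel elements vanishes because $(V,W,\varphi)$ is an \emph{abelian} crossed module, which is precisely what eliminates the would-be quadratic cross terms and keeps the output linear in $(\psi,\omega,\mu,\nu)$. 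Once these reductions are applied consistently, collecting terms reproduces the seven defining equations of a $2$-cocycle, completing the proof.
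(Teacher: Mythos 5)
Your proposal is correct and follows essentially the same route as the paper: substitute the splitting into the equivariance of $\widehat f$, the compatibility $\widehat f\sigma_1(a)\cdot\sigma_1(b)=\sigma_1(a)\cdot\widehat f\sigma_1(b)$, the associativity of $\widehat{\pp}$, and the three module-associativity identities, expand via \eqref{extension:cocycle}, and use abelianness of $(V,W,\varphi)$ to kill the quadratic cross terms. Your explicit attention to reading the mixed products as the induced actions $\trl,\trr$ is exactly the reduction the paper performs implicitly.
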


\begin{proof}
Since $\widehat{f}$ is an equivariant map, we have the equality
$$\widehat{f}\big(\sigma_0(x)\cdot\sigma_1(a))\big)=\sigma_0(x)\widehat{f}\sigma_1(a),$$
then by \eqref{extension:cocycle} we get that
\begin{eqnarray*}
\widehat{f}\big(\mu(x,{a})+\sigma_1( x\cdot {a} )\big)=\sigma_0(x)\big(\sigma_0(f(a))+\psi(a)\big),
\end{eqnarray*}
\begin{eqnarray*}
\varphi(\mu(x,{a}))+\psi( x\cdot {a} )+\sigma_0(f( x\cdot {a} ))= \omega(x,f(a))+\sigma_0(xf(a))+x\cdot \psi(a).
\end{eqnarray*}
Thus we obtain
\begin{eqnarray}\label{eq:c1}
x\cdot \psi(a)+\omega(x,f(a))&=&\psi( x\cdot {a} )+\varphi(\mu(x,{a})).
\end{eqnarray}

 Since $\widehat{\pp}$ is an associative algebra,
$$
 (\sigma_0(x))\big(\sigma_0(y)\sigma_0(z)\big)=\big(\sigma_0(x)\sigma_0(y)\big) (\sigma_0(z)),
$$
then we get
\begin{eqnarray}
{x\cdot \omega (y, z)+\omega (x, y\cdot z)}=\omega(x\cdot y,  z)+\omega (x, y)\cdot  z.\label{eq:c3}
\end{eqnarray}
Since $\widehat{\hh}$ is a left module of $\widehat{\pp}$,  we have the equality
\begin{eqnarray*}
&& \sigma_0(x)\cdot(\sigma_0(y)\cdot\sigma_1(a))-(\sigma_0(x)\sigma_0(y))\cdot( \sigma_1(a))-\\
&=&(\sigma_0x)\cdot\big(\mu(y,{a})+\sigma_1(y\cdot {a})\big)-\big(\omega(x,y)+\sigma_0(x\cdot y)\big)\cdot(\sigma_1(a))\\
&=&\sigma_1(x\cdot (y\cdot {a}))+\mu(x,y\cdot {a})+x\cdot \mu(y,{a})\\
&&-\sigma_1((x\cdot y)\cdot {a})-\mu(x\cdot y,{a})-\omega(x,y)\trl {a}\\
&=&0.
\end{eqnarray*}
Thus, we get
\begin{eqnarray}
\notag{x\cdot\mu(y,{a})+\mu(x,y\cdot {a})}&=&\omega(x,y)\trl {a}+\mu(x\cdot y,{a}).\label{eq:c41}
\end{eqnarray}
Similarly, since $\widehat{\hh}$ is a right module of $\widehat{\pp}$,  we obtain
\begin{eqnarray}
{a} \trr\omega(x,y)+\nu({a}, (x\cdot y))&=&\nu({a},x)\cdot  y+\nu({a}\cdot x, y).\label{eq:c42}
\end{eqnarray}
By the equality
\begin{eqnarray*}
&& \sigma_0(x)\cdot(\sigma_1(a)\sigma_0(y))-(\sigma_0(x)\cdot\sigma_1(a))\cdot  \sigma_0(y)\\
&=&(\sigma_0x)\cdot\big(\mu(y,{a})+\sigma_1(y\cdot {a})\big)-(\sigma_1(x\cdot {a})+\mu(x,{a}))\cdot\sigma_0(y)\\
&=&\sigma_1(x\cdot ({a}\cdot y))+\mu(x, {a}\cdot y)+x\cdot \nu({a},y)\\
&&-\sigma_1((x\cdot {a})\cdot  y)-\mu(x\cdot {a}, y)-\mu(x,{a})\cdot  y\\
&=&0,
\end{eqnarray*}
we get
\begin{eqnarray}
\nonumber\mu(x, {a}\cdot y)+x\cdot \nu({a},y)&=&\nu(x\cdot {a},  y)+\mu(x,{a})\cdot y\label{eq:c43}
\end{eqnarray}

Finally, since $\widehat{f}$ is the structure map of the crossed module, we have the equality
$$\widehat{f}(\sigma_1(a))\cdot\sigma_1(b)=\sigma_1(a)\cdot\widehat{f}(\sigma_1(b)),$$
then by
$$\psi(a)\triangleq\widehat{f}\sigma_1(a)-\sigma_0(f(a))\in W,$$
 we get that
\begin{eqnarray*}
&&(\psi(a)+\sigma_0(f(a)))\cdot\sigma_1(b)\\
&=&\psi(a)\cdot\sigma_1(b)+\sigma_0(f(a))\cdot\sigma_1(b)\\
&=&\psi(a)\cdot\sigma_1(b)+\mu(f(a), n)-\sigma_1(f(a)\cdot n)
\end{eqnarray*}
and
\begin{eqnarray*}
&&\sigma_1(a)\cdot (\psi(b)+\sigma_0(f(b)))\\
&=&\sigma_1(a)\cdot \psi(b)+\sigma_1(a)\cdot \sigma_0(f(b))\\
&=&\sigma_1(a)\cdot \psi(b)+ \nu({a},f(b))+\sigma_1({a}\cdot f(b)).
\end{eqnarray*}
Thus we obtain
\begin{eqnarray}
\psi (a)\trl {b}+\nu({f} (a),{b})-{a}\trr \psi (b)-\nu({a},{f} (b))=0.
\end{eqnarray}

Therefore, by the above discussion, we obtain that $(\psi,\omega,\mu,\nu)$ is a $2$-cocycle of $(\hh,\pp, f)$ with coefficients in $(V,W,\varphi)$.
This complete the proof.
\end{proof}

Now we define a crossed module over associative algebra on $(\hh \oplus V, \pp \oplus W, \widehat{f})$ using the 2-cocycle given above.
More precisely, we have
\begin{equation}\label{eq:multiplication}
\left\{\begin{array}{rcl}
\widehat{f}({a}+v)&\triangleq&\dM({a})+\psi(v)+\varphi(v),\\
{(x+w)(x'+w')}&\triangleq&x\cdot x'+\omega(x,x')+x\cdot w'+w\cdot x',\\
{(x+w)\cdot ({a}+v)}&\triangleq& x\cdot {a} +\mu(x,{a})+x\cdot v+ w\trl {a},\\
{({a}+v)\cdot (x+w)}&\triangleq& {a}\cdot x +\nu({a},x)+v\cdot x+ {a}\trr w,
\end{array}\right.
\end{equation}
for all $x,y,z\in {\pp}$, ${a},{b}\in \hh$,
$v\in V$ and $w\in W$. Thus any extension
$\widehat{E}$ given by \eqref{eq:ext1} is isomorphic to
\begin{equation}\label{eq:ext2}
\CD
  0 @>0>>  V @>i_1>>  \hh \oplus V@>p_1>>  \hh  @>0>> 0 \\
  @V 0 VV @V \varphi VV @V \widehat{{f}} VV @V{f} VV @V0VV  \\
  0 @>0>>  W  @>i_0>> \pp \oplus W @>p_0>> \pp @>0>>0,
\endCD
\end{equation}
where the crossed module over associative algebra is given as above \eqref{eq:multiplication}.

\begin{thm}\label{mainthm512}
There is a one-to-one correspondence between equivalence classes of abelian extensions and the second cohomology group ${H}^2((\hh,\pp,f),(V,W,\varphi))$.
\end{thm}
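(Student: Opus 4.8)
The plan is to follow the argument of Theorem \ref{mainthm44} almost verbatim, now in the crossed-module setting. I would first fix the representation $(V,W,\varphi)$ supplied by Proposition \ref{pro:2-modules} and regard the assignment
$$
\widehat{\E}\ \longmapsto\ [(\psi,\omega,\mu,\nu)]\in H^2((\hh,\pp,f),(V,W,\varphi))
$$
produced by a splitting $\sigma$ together with Theorem \ref{thm:2-cocylce}. The first point to settle is that this is well defined on cohomology classes: a second splitting $\sigma'$ differs from $\sigma$ by linear maps $\lambda_0=\sigma_0-\sigma'_0:\pp\to W$ and $\lambda_1=\sigma_1-\sigma'_1:\hh\to V$ landing in $\Ker p_0\cong W$ and $\Ker p_1\cong V$, and a direct substitution into \eqref{extension:cocycle} shows that the two cocycles differ precisely by $D_1(\lambda_0,\lambda_1)$. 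Hence the class is independent of $\sigma$, and the heart of the theorem is the biconditional: two abelian extensions are equivalent if and only if their $2$-cocycles are cohomologous.

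For the forward direction I would put both extensions into the normal form \eqref{eq:ext2}, so that an equivalence is a crossed-module homomorphism $F=(F_0,F_1)$ covering $\id$ on $(\hh,\pp)$ and fixing $(V,W)$; consequently $F_0(x+w)=x+\lambda_0(x)+w$ and $F_1(a+v)=a+\lambda_1(a)+v$ for uniquely determined $\lambda_0:\pp\to W$, $\lambda_1:\hh\to V$. Unwinding the four homomorphism axioms -- compatibility of $F$ with the structure maps $\widehat f$, multiplicativity of $F_0$, and the requirement that $F_1$ respect the left and right $\pp$-actions -- evaluated on the generators $x+w$ and $a+v$ of \eqref{eq:multiplication}, reproduces exactly the four components of $D_1(\lambda_0,\lambda_1)$ listed before this theorem. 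Thus $(\psi,\omega,\mu,\nu)-(\psi',\omega',\mu',\nu')=D_1(\lambda_0,\lambda_1)$, so the cocycles are cohomologous. A pleasant simplification over Theorem \ref{mainthm44} is that, because crossed modules correspond to \emph{strict} associative $2$-algebras (Proposition \ref{thm:correspondence}), there is no $F_2$-term to carry along, and the components involving $l_3$ and $\theta$ drop out entirely.

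Conversely, given cocycles with $(\psi,\omega,\mu,\nu)-(\psi',\omega',\mu',\nu')=D_1(\lambda_0,\lambda_1)$, I would define $F_0$ and $F_1$ by the very same formulas and verify that they constitute a homomorphism between the two crossed modules built via \eqref{eq:multiplication}; the four identities just used run backwards, and by construction $F\circ i=j$ and $q\circ F=p$, so $F$ is an equivalence. Finally, surjectivity of the correspondence onto $H^2$ is immediate: every $2$-cocycle yields an abelian extension through \eqref{eq:multiplication}, and computing its associated cocycle from the obvious splitting returns the original $(\psi,\omega,\mu,\nu)$. Together these establish the claimed bijection.

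I expect the only real obstacle to be the bookkeeping in the forward direction: matching each homomorphism axiom, term by term and with correct signs, against the corresponding component of the coboundary operator. This is routine but must be done carefully, and it is precisely the computation that was spelled out for associative $2$-algebras in \eqref{eq:exact1}--\eqref{eq:exact4}; here it is shorter owing to the absence of the $l_3$ and $F_2$ data. An alternative, if one prefers to avoid recomputation, would be to transport Theorem \ref{mainthm44} across the equivalence of Proposition \ref{thm:correspondence}, but a direct verification keeps the cohomology classes manifestly inside the crossed-module complex.
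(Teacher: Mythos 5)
Your proposal is correct and matches the paper's intended argument: the paper itself gives no details here, stating only that the proof is analogous to that of Theorem \ref{mainthm44}, and your write-up carries out exactly that transposition (splitting-independence, the forward and converse directions via $D_1(\lambda_0,\lambda_1)$, and surjectivity via \eqref{eq:multiplication}), correctly noting the simplification that no $F_2$ or $l_3$ terms appear in the strict setting.
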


The proof of Theorem \ref{mainthm512} is similar as the proof of Theorem \ref{mainthm44}, so we omit the details.

\section*{Acknowledgements.}
This is a primary edition. Something will be modified in the future.

\vskip7pt
\footnotesize{
\noindent Tao Zhang\\
College of Mathematics and Information Science,\\
Henan Normal University, Xinxiang 453007, P. R. China;\\
 E-mail address: \texttt{{zhangtao@htu.edu.cn}}

\end{document}